\newcommand*\patchAmsMathEnvironmentForLineno[1]{%
  \expandafter\let\csname old#1\expandafter\endcsname\csname #1\endcsname
  \expandafter\let\csname oldend#1\expandafter\endcsname\csname end#1\endcsname
  \renewenvironment{#1}%
     {\linenomath\csname old#1\endcsname}%
     {\csname oldend#1\endcsname\endlinenomath}}%
\newcommand*\patchBothAmsMathEnvironmentsForLineno[1]{%
  \patchAmsMathEnvironmentForLineno{#1}%
  \patchAmsMathEnvironmentForLineno{#1*}}%
\newcommand{\blambda}{\overline{\lambda}}
\newcommand{\btlambda}{\overline{\lambda'}}
\newcommand{\tlambda}{\lambda'}
\newcommand{\hlambda}{\widehat{\lambda}}
\newcommand{\masslambda}{w}
\newcommand{\tmasslambda}{w'}
\newcommand{\bmu}{\overline{\mu}}
\newcommand{\dd}{\mathrm{d}}
\newcommand{\xx}{\boldsymbol{x}}
\newcommand{\yy}{\boldsymbol{y}}
\newcommand{\zz}{\boldsymbol{z}}
\newcommand{\E}{{\rm E}}
\newcommand{\talpha}{{\tilde{\alpha}}}
\newcommand{\tgamma}{{\tilde{\gamma}}}
\newcommand{\abslambda}{\lvert \lambda \rvert}
\newcommand{\absalpha}{\lvert \alpha \rvert}
\renewcommand{\Gamma}{\varGamma}
\newcommand{\xnum}{{N}}
\newcommand{\ynum}{{M}}
\newcommand{\Po}{\mathcal{\mathcal{P\hspace{-1.7pt}o}}}
\newcommand{\Ga}{\mathcal{G \hspace{-1.7pt} a}}
\newcommand{\Di}{\mathcal{D \hspace{-1.7pt} i}}
\newcommand{\NeBi}{\mathcal{N\hspace{-2.75pt}e\hspace{-1.7pt}B\hspace{-1.7pt}i}}
\newtheorem{theorem}{Theorem}
\newtheorem{lemma}{Lemma}
\title[Shrinkage priors for nonhomogeneous Poisson processes]
{Shrinkage priors for nonparametric Bayesian prediction of nonhomogeneous Poisson processes}
\author{Fumiyasu Komaki$^{1,2,3}$}
\address
{$^1$Department of Mathematical Informatics,
The University of Tokyo, \\ 7-3-1 Hongo, Bunkyo-ku, Tokyo 113-8656, Japan.\\
$^2$RIKEN Center for Brain Science,
2-1 Hirosawa, Wako-shi, Saitama 351-0198, Japan. \\
$^3$International Research Center for Neurointelligence (IRCN), \\
The University of Tokyo, 7-3-1 Hongo Bunkyo-ku, Tokyo 113-0033, Japan.
}
\email{komaki@mist.i.u-tokyo.ac.jp}
\begin{document}

\keywords{Admissibility, Dirichlet process, gamma process, kernel mixture, predictive density}

\maketitle
\begin{abstract}
We consider nonparametric Bayesian estimation and prediction
for nonhomogeneous Poisson process models with unknown intensity functions.
We propose a class of improper priors for intensity functions.
Nonparametric Bayesian inference with kernel mixture based on the class improper priors is shown to be useful,
although improper priors have not been widely used for nonparametric Bayes problems. 
Several theorems corresponding to those for finite-dimensional independent Poisson models
hold for nonhomogeneous Poisson process models with infinite-dimensional parameter spaces.
Bayesian estimation and prediction based on the improper priors
are shown to be admissible under the Kullback--Leibler loss.
Numerical methods for Bayesian inference based on the priors are investigated.
\end{abstract}
\section{Prediction based on models with finite dimensional parameter}
Statistical modeling and data analysis based on nonhomogeneous Poisson point processes have various applications
\citep[e.g.][]{SM91, Streit10}.
We consider nonparametric Bayesian inference with kernel mixture
for nonhomogeneous Poisson processes from the viewpoint of predictive density theory.
In the present paper,
it is shown that Bayesian procedures based on a class of improper priors provide reasonable estimation and prediction.
Several theorems concerning admissibility
corresponding to those for finite-dimensional independent Poisson models are shown to hold
for the nonhomogeneous Poisson process model.

In this section, we summarize the basic framework of predictive density theory mainly studied 
for finite-dimensional models.
Suppose that we have an observation $x$
from a probability density $p(x \mid \theta)$ that belongs to a parametric model
$\{p(x \mid \theta) \mid \theta \in \Theta \in \mathbb{R}^d \}$.
The objective is to predict an unobserved random variable $y$
distributed according to $p(y \mid \theta)$ using a predictive density $q(y;x)$.

We adopt the Kullback--Leibler loss
\begin{align*}
D\{p(y \mid \theta),q(y;x)\}=\int p(y \mid \theta)
\log \frac{ p(y \mid \theta)}{q(y;x)} \dd y.
\end{align*}
from the true density $p(y \mid \theta)$ to a predictive density $q(y;x)$.
A predictive density $q(y ; x)$ is said to dominate another predictive density $q'(y ; x)$
if the risk of $q(y ; x)$ is not greater than that of $q'(y ; x)$ for all $\theta$ and 
the strict inequality holds
for at least one point $\theta$ in the parameter space.
A predictive density $q(y ; x)$ is said to be admissible
if there does not exist a predictive density dominating $q(y;x)$.
See \citet{Berger85} and \citet{Schervish95}
for basic frameworks of statistical decision theory concerning admissibility and its relationship with Bayes theory.

Many studies recommend Bayesian predictive densities
\[
p_{\pi}(y \mid x)
= \frac{\int p(y \mid \theta)p(x \mid \theta)\pi(\theta) \dd \theta}
{\int p(x \mid \overline{\theta})\pi(\overline{\theta}) \dd \overline{\theta}}
\]
based on a prior $\pi(\theta)$ rather than plug-in densities
$p(y \mid \widehat{\theta}(x))$
\citep[e.g.][]{AD75,Geisseer93,K:Biometrika1996},
where $\widehat{\theta}$ is an estimated value of $\theta$.
This is because there exist Bayesian predictive densities
dominating plugin densities in many examples.

It is important to construct prior distributions for Bayesian inference
when we do not have specific prior information about unknown parameters.
Such a prior is called a noninformative prior or an objective prior.
The Jeffreys prior is a theoretically important objective prior.
Let $\theta = (\theta_1, \ldots, \theta_d)$ be an unknown parameter
of a finite dimensional parametric statistical model and $I(\theta)$ be the corresponding Fisher information matrix.
Then, the Jeffreys prior is defined by
\begin{align*}
 \pi_\mathrm{J}(\theta) \dd \theta_1 \cdots \dd \theta_d
= |I(\theta)|^{1/2} \dd \theta_1 \cdots \dd \theta_d,
\end{align*}
where $|I(\theta)|$ is the determinant of $I(\theta)$.
Noninformative priors such as the Jeffreys prior often becomes improper, i.e.\ the total mass of the prior is infinite.
The Bayes theorem cannot be directly applied to improper priors because they are not probability distributions, whose total masses are $1$.
However, formal application of the Bayes theorem to improper priors sometimes gives useful prediction and estimation.
Statistical decision theory concerning admissibility provides a basis of such generalized Bayesian procedure based on improper priors
\citep[e.g.][]{Berger85}.

Bayesian predictive densities based on shrinkage priors often dominate
the Bayesian predictive densities based on the Jeffreys prior
when the dimension of the parameter space is large.
Shrinkage priors assign more weight to parameter values close to
a subset in the parameter space than the Jeffreys prior.
In particular, Bayesian prediction based on shrinkage priors for finite-dimensional models such as
the multivariate Normal model
and the multidimensional independent Poisson model have been investigated.
See \citet{FSW:book18} for recent developments of parameter estimation theory based on shrinkage priors.

First, consider the $d$-dimensional Normal model.
Suppose that $x_i$ $(i=1,\ldots,d)$ are independently distributed
according to $\mbox{N}(\mu_i, \sigma^2)$
and that $y_i$ $(i=1,\ldots,d)$ are independently distributed
according to $\mbox{N}(\mu_i, \tau^2)$,
where $\mbox{N}(\mu_i,\sigma^2)$ is the Normal distribution with mean $\mu_i$ and variance $\sigma^2$.
Here, $\mu := (\mu_1, \ldots, \mu_d)$ is the unknown parameter and $\sigma$ and $\tau$ are known positive constants.
We consider prediction of $y = (y_1,y_2,\ldots,y_d)$
using $x=(x_1,x_2,\ldots,x_d)$
under the Kullback--Leibler loss.
The Bayesian predictive density $p_\mathrm{S}(y \mid x)$
based on the prior 
$\pi_\mathrm{S}(\mu) = \| \mu \|^{-(d-2)} = (\sum_i {\mu_i}^2)^{-(d-2)/2}$
introduced by \citet{Stein74}
dominates the Bayesian predictive density $p_\mathrm{J}(y \mid x)$
based on the Jeffreys prior $\pi_\mathrm{J}(\mu)=1$ \citep{K:Biometrika2001}.
This corresponds to the widely known result that
the generalized Bayes estimator based on Stein's prior $\pi_\mathrm{S}(\mu)$
dominates the best invariant estimator $\widehat{\mu} = x$ when $d \geq 3$.
See \cite{GLX06} for sufficient conditions for general priors other than the Stein prior
and \cite{BGX:AS08} for admissible predictive densities for Normal models.
The asymptotics of minimax risk of predictive density estimation for non-parametric regression
is studied by \citet{XuLiang10}.

Next, consider the $d$-dimensional Poisson model, which is closely related to the nonhomogeneous Poisson process models
considered herein.
Intuitively speaking, nonhomogeneous Poisson process models are infinite-dimensional Poisson models.
Suppose that $x_i$ $(i=1,\ldots,d)$ are independently distributed
according to $\mbox{Po}(s \lambda_i)$
and that $y_i$ $(i=1,\ldots,d)$ are independently distributed
according to $\mbox{Po}(t \lambda_i)$, where $\mbox{Po}(s \lambda_i)$ is the Poisson distribution with mean $s \lambda_i$,
$\lambda := (\lambda_1, \ldots, \lambda_d)$ is the unknown parameter, and $s$ and $t$ are known positive constants.
We consider prediction of $y = (y_1,y_2,\ldots,y_d)$ using $x=(x_1,x_2,\ldots,x_d)$
under the Kullback--Leibler loss.

A natural class of priors including the Jeffreys prior
$\pi_\mathrm{J} (\lambda) \dd \lambda_1 \dotsb \dd \lambda_d
= \lambda_1^{-\frac{1}{2}} \dotsb \lambda_d^{-\frac{1}{2}} \dd \lambda_1 \dotsb \dd \lambda_d$ is
$\pi_\alpha (\lambda) \dd \lambda_1 \dotsb \dd \lambda_d
:= \lambda_1^{\alpha_1-1} \dotsb \lambda_d^{\alpha_d-1} \dd \lambda_1 \dotsb \dd \lambda_d$,
where $\alpha_i > 0$ $(i=1,\ldots,d)$.
A class of improper prior densities
\begin{align}
\pi_{\alpha, \gamma}(\lambda) \dd \lambda_1 \dd \lambda_2 \cdots \dd \lambda_d
&= \frac{\lambda_1^{\alpha_1-1} \lambda_2^{\alpha_2-1} \cdots \lambda_d^{\alpha_d-1}}
{(\lambda_1+\lambda_2+\cdots+\lambda_d)^\gamma}
\dd \lambda_1 \dd \lambda_2 \cdots \dd \lambda_d
\label{eq:class}
\end{align}
with $\sum_i \alpha_i - \gamma > 0$ and $\alpha_i > 0$ $(i=1,2,\ldots,d)$ is investigated
and Theorems 1 and 2 below are obtained \citep{K:AS2004}.

\vspace{0.4cm}

\begin{theorem}[\citealp{K:AS2004}]
When 
$\sum_i \alpha_i - \gamma > 1 ~~\mbox{and}~~ \alpha_i > 0 ~~~ (i=1,2,\ldots,d)$,
the Bayesian predictive density
$p_{\alpha, \gamma}(y \mid x) ~~{\mbox{based on}}~~ \pi_{\alpha,\gamma}(\lambda)$
is dominated by the Bayesian predictive density
$p_{\talpha, \tilde{\gamma}}(y \mid x)~~\mbox{based on}~~ \pi_{\talpha, \tilde{\gamma}}(\lambda)$,
where $\tilde{\gamma} := \sum_i \alpha_i -1$ and
$\talpha = (\talpha_1, \talpha_2, \ldots, \talpha_d) := (\alpha_1, \alpha_2, \ldots, \alpha_d)$.
\end{theorem}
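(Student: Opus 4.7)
The plan is to exploit the polar decomposition $\lambda_i = \mu v_i$ with $\mu := \sum_j \lambda_j$ and $v = (v_1, \ldots, v_d)$ on the unit simplex. In these coordinates the prior factors as
\[
\pi_{\alpha,\gamma}(\lambda)\,\dd\lambda = \mu^{|\alpha|-\gamma-1}\,\dd\mu \cdot v_1^{\alpha_1-1}\cdots v_d^{\alpha_d-1}\,\dd v,
\]
so $\pi_{\alpha,\gamma}$ and $\pi_{\talpha,\tgamma}$ share the same proper Dirichlet structure on $v$ and differ only through the improper power prior on $\mu$. Writing $\beta := |\alpha| - \gamma$, the hypothesis reads $\beta > 1$, whereas $|\talpha| - \tgamma = 1$ by the definition of $\tgamma$.

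First, I would derive the closed form
\[
p_{\alpha,\gamma}(y \mid x) = \NeBi\!\bigl(|y|;\,|x|+\beta,\,s/(s+t)\bigr) \cdot \mathrm{DM}\!\bigl(y;\,|y|,\,\alpha+x\bigr),
\]
where $\mathrm{DM}$ denotes the Dirichlet-Multinomial. This follows by conjugacy---the posterior factors as $\mu \mid x \sim \Ga(|x|+\beta, s)$ and $v \mid x \sim \Di(\alpha+x)$, independently---combined with Poisson thinning, which writes $y$ as $|y| \sim \Po(t\mu)$ together with a Multinomial$(|y|, v)$ on the allocation. Only the $\NeBi$ factor depends on $\gamma$, so the Dirichlet-Multinomial piece cancels in the log-ratio, and $\log\{p_{\talpha,\tgamma}(y\mid x)/p_{\alpha,\gamma}(y\mid x)\}$ is a function of $N := |x|$ and $M := |y|$ alone.

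Next, under the true parameter $\lambda$, thinning gives $N \sim \Po(s\mu)$ and $M \sim \Po(t\mu)$ independently, with $\mu = |\lambda|$; a direct manipulation with gamma-function ratios produces
\[
R(\lambda, p_{\alpha,\gamma}) - R(\lambda, p_{\talpha,\tgamma}) = (\beta-1)\log\frac{s+t}{s} - \bigl[\Psi(s+t) - \Psi(s)\bigr],
\]
where $\Psi(c) := \E[\log \Gamma(W+\beta) - \log \Gamma(W+1)]$ with $W \sim \Po(c\mu)$.

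The final and most delicate step is to show $\Psi(s+t) - \Psi(s) < (\beta-1)\log((s+t)/s)$ for every $\mu > 0$. I would write the left-hand side as $\int_s^{s+t} \Psi'(c)\,\dd c$ and compute $\Psi'(c)$ using the Poisson ``integration by parts'' identity $\E[(W/(c\mu))f(W)] = \E[f(W+1)]$, which yields
\[
\Psi'(c) = \mu \, \E\!\left[\log \frac{W+\beta}{W+1}\right].
\]
Combining $\log(1+x) \le x$ with the explicit evaluation $\E[1/(W+1)] = (1-e^{-c\mu})/(c\mu)$ produces the strict bound $\Psi'(c) \le (\beta-1)(1-e^{-c\mu})/c < (\beta-1)/c$, and integration over $[s, s+t]$ finishes the proof, showing in fact that dominance is strict for every $\lambda$ with $|\lambda| > 0$.

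The main obstacle is the structural reduction in the first two steps. One must carefully execute the conjugacy calculation in $(\mu, v)$-coordinates, verify that the Dirichlet-Multinomial factor genuinely cancels from the log-ratio, and then reformulate the scalar risk difference as an integral over the ``time'' parameter $c \in [s, s+t]$ so that the Poisson integration-by-parts identity applies. After this reduction, the elementary inequality $\log(1+x) \le x$ closes the argument cleanly.
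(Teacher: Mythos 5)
Your proposal is correct and follows essentially the same route as the paper: the polar decomposition into total mass $\mu$ and shape $v$ with cancellation of the shape (Dirichlet--multinomial) factor, reduction of the risk difference to the one-dimensional Poisson--gamma subproblem, the representation of that difference as an integral over the time parameter via the Poisson derivative identity (Lemma 1 of the paper), and the final bound $\theta\,\E[\log(X+1+c)-\log(X+1)] \le c(1-e^{-\theta}) < c$, which is exactly the paper's Lemma 3. This is the same argument the paper deploys for the infinite-dimensional analogue (Theorem 4), so no further comment is needed.
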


\begin{theorem}[\citealp{K:AS2004}]
For every $d \geq 1$, the Bayesian predictive densities based on the priors in the class
$\{\pi_{\alpha,\gamma}(\lambda): 0 < \sum_{i=1}^d \alpha_i - \gamma \leq 1, \alpha_i > 0~(i=1,2,\ldots,d)\}$
defined by (\ref{eq:class}) are admissible under the Kullback--Leibler loss.
\end{theorem}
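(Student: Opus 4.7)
The plan is to prove Theorem~2 by Blyth's method, adapted to predictive density estimation. Suppose, toward a contradiction, that some $q(y;x)$ dominates $p_{\alpha,\gamma}(y\mid x)$, so that $\rho(\lambda) := R(\lambda,p_{\alpha,\gamma}) - R(\lambda,q)$ is nonnegative and strictly positive at some $\lambda^{*}$; by the smoothness of the Poisson family the map $\lambda \mapsto \rho(\lambda)$ is continuous, hence $\rho$ is strictly positive on an open neighborhood $U$ of $\lambda^{*}$. I would then construct a sequence of finite measures $\pi_{n}$ approximating $\pi_{\alpha,\gamma}$ on compact subsets of $U$, and compare the Bayes risks against $\pi_{n}$ to produce a contradiction.

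The key tool is a Poisson-specific identity for the Bayes risk difference. Setting $z_{i} := x_{i}+y_{i}$, the splitting property of Poisson processes yields $z\mid \lambda \sim \bigotimes_{i}\mathrm{Po}((s+t)\lambda_{i})$ together with a conditional $x\mid z$ that is a product of binomials with success probability $s/(s+t)$ and is independent of $\lambda$. Writing $m_{\pi}^{(r)}$ for the marginal law of a Poisson sample with intensity $r\lambda$ under a prior $\pi$, a direct computation gives
\[
r(\pi_{n},p_{\alpha,\gamma}) - r(\pi_{n},p_{\pi_{n}}) = D\!\left(m_{\pi_{n}}^{(s+t)}\,\|\,m_{\alpha,\gamma}^{(s+t)}\right) - D\!\left(m_{\pi_{n}}^{(s)}\,\|\,m_{\alpha,\gamma}^{(s)}\right).
\]
Both marginals $m_{\alpha,\gamma}^{(r)}$ are $\sigma$-finite and finite at each point of $\mathbb{N}_{0}^{d}$ since $\sum_{i}\alpha_{i}-\gamma > 0$, and the right-hand side is nonnegative by the data-processing inequality applied to the thinning $z \mapsto x$.

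For the approximating sequence I would take $\pi_{n}(\lambda) = \pi_{\alpha,\gamma}(\lambda)\,g_{n}(\sum_{i}\lambda_{i})$, where $g_{n}$ is a tapering function on the total intensity (for instance $g_{n}(u) = \mathrm{e}^{-u/n}$) that leaves the simplex part of $\pi_{\alpha,\gamma}$ intact. Passing to the variables $u = \sum_{j}\lambda_{j}$ and $\beta_{i} = \lambda_{i}/u$, the prior factorizes into a proper Dirichlet$(\alpha)$ law for $\beta$ and a one-dimensional improper piece $u^{\sum_{i}\alpha_{i}-\gamma-1}\,\dd u$ on $(0,\infty)$. Because the $\beta$ contribution cancels between the $z$- and $x$-terms in the identity, the Bayes risk difference collapses to a one-dimensional KL-difference for the total-intensity problem, which in the range $0 < \sum_{i}\alpha_{i}-\gamma \leq 1$ falls within the classical scale-invariant framework where $g_{n}$ can be chosen so as to force this difference to zero.

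The main obstacle is the boundary subcase $\sum_{i}\alpha_{i}-\gamma = 1$, where the reduced one-dimensional prior on $u$ is Lebesgue measure on $(0,\infty)$: each KL term in the identity diverges logarithmically in $n$, and the argument depends on an exact cancellation that requires an expansion of the negative-binomial-type form of $m_{\alpha,\gamma}^{(r)}$ uniform in $r \in \{s,s+t\}$. Once that cancellation is established, Blyth's lemma delivers the contradiction: the dominance assumption forces $r(\pi_{n},p_{\alpha,\gamma}) - r(\pi_{n},p_{\pi_{n}}) \geq \int_{U}\rho(\lambda)\,\pi_{n}(\dd\lambda)$, whose right-hand side stays bounded below by a positive constant since $\pi_{n} \to \pi_{\alpha,\gamma}$ on compact subsets of $U$, while the identity forces the left-hand side to vanish in the limit.
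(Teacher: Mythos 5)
Your overall strategy is the right one and matches the proof in \citet{K:AS2004} (and the paper's Appendix~A proof of the infinite-dimensional analogue): Blyth's method, the marginal-likelihood identity for the Bayes risk difference, and the reduction to a one-dimensional problem in $u=\sum_i\lambda_i$ after factoring $\pi_{\alpha,\gamma}$ into a proper Dirichlet$(\alpha)$ law on the simplex times the improper density $u^{\sum_i\alpha_i-\gamma-1}\,\dd u$, with the simplex contribution cancelling because the taper touches only $u$. But there is a genuine gap, and it sits exactly where you flag it: the boundary case $\sum_i\alpha_i-\gamma=1$. This is not a removable technicality to be handled by an unspecified ``exact cancellation'' --- it is the most important case, since it contains the headline shrinkage prior $\pi_{\mathrm{S}}$ with $\alpha_i=\tfrac12$ and $\gamma=\tfrac{d}{2}-1$. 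With your conjugate taper $g_n(u)=\mathrm{e}^{-u/n}$ the argument provably cannot close there: writing $g_n=h_n^2$ with $h_n(u)=\mathrm{e}^{-u/(2n)}$ and $c:=\gamma-\sum_i\alpha_i+1$, the Cauchy--Schwarz (Dirichlet-form) bound on the one-dimensional Blyth difference is controlled by $\int_0^\infty u^{1-c}\,(h_n'(u))^2\,\dd u=\tfrac{1}{4}\Gamma(2-c)\,n^{-c}$, which at $c=0$ is a constant independent of $n$; the Bayes risk gap does not vanish and no contradiction is obtained.

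The fix is not a sharper expansion of the negative-binomial marginals but a different taper: the Brown--Hwang sequence $h_l(u)=1$ on $[0,1]$, $h_l(u)=1-\log u/\log l$ on $(1,l]$, $h_l(u)=0$ beyond, with $\pi^{[l]}=\pi_{\alpha,\gamma}\cdot\tfrac12 h_l^2$. Then $\int_0^\infty u^{1-c}(h_l'(u))^2\,\dd u=\int_1^l u^{-1-c}(\log l)^{-2}\,\dd u\le 1/\log l$ uniformly over $0\le c<1$, so the same Cauchy--Schwarz bound gives $O(1/\log l)$ for the whole range $0<\sum_i\alpha_i-\gamma\le 1$, boundary included. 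This choice also simplifies the lower-bound side of Blyth: since $h_l\equiv 1$ on $\{u\le 1\}$, one has $\dd\pi^{[l]}/\dd\pi_{\alpha,\gamma}=\tfrac12$ on a fixed set of positive prior mass, and the contradiction can be drawn from strict convexity of the Kullback--Leibler loss together with mutual absolute continuity of the $p_\lambda$, without the continuity of $\lambda\mapsto\rho(\lambda)$ that you assert but do not justify (the risk functions here can be infinite, so continuity is not automatic from smoothness of the Poisson family). Your data-processing identity and the Dirichlet-multinomial cancellation are both correct and are exactly the right reduction; only the tapering sequence, and hence the entire boundary case, needs to be replaced.
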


\vspace{0.4cm}

In particular,
when $d \geq 3$,
the Bayesian predictive density $p_{\pi_\mathrm{S}}(y \mid x)$ based on the shrinkage prior
$\pi_\mathrm{S}(\lambda)
:= \pi_{\alpha = (\frac{1}{2},\dots,\frac{1}{2}),\, \gamma = \frac{d}{2}-1}(\lambda)$
dominates the Bayesian predictive density $p_{\pi_\mathrm{J}}(y \mid x)$ based on the Jeffreys prior
and is admissible under the Kullback--Leibler loss.
Parameter estimation
can be regarded as infinitesimal prediction under the Kullback--Leibler loss
in the multivariate Poisson model \citep{K:JMVA2006}.
Intuitively, infinitesimal prediction means prediction for infinitely near future.

In the present paper, we generalize the results for finite-dimensional independent Poisson models to the results for
nonhomogeneous Poisson models.
The remainder of the present paper paper is organized as follows.
In Section 2, a class of improper shrinkage priors for 
nonparametric Bayesian inference with kernel mixtures for nonhomogeneous Poisson models is introduced.
Several theorems concerning admissibility of Bayesian predictive densities and Bayes estimators
for nonhomogeneous Poisson models
corresponding to those for finite-dimensional models are proved.
In Section 3, numerical methods to evaluate Bayesian predictive densities and Bayes estimators are investigated.
Finally, conclusions are presented in Section 4.
\section{Nonparametric Bayesian inference for nonhomogeneous Poisson processes}
We consider nonhomogeneous Poisson processes on a region $U$ in the Euclidean space $\mathbb{R}^d$.
The results in the following can be generalized to those for nonhomogeneous Poisson processes
on general spaces such as a Polish space.

Basic properties of nonparametric inference of nonhomogeneous Poisson processes
using gamma process priors are given by \cite{Lo82} and \cite{LoWeng89}.
Corresponding results for probability density estimation are given by \cite{Lo84}.
Various estimation methods of intensity functions
of nonhomogeneous Poisson process models have been studied \citep[e.g.][]{BBMT81, Kolaczyk1999}.

Conventional statistical decision theory mainly deals with problems with finite dimensional parameters.
In the present paper, we show that the framework of statistical decision theory is effectively applied to
prediction and estimation for nonhomogeneous Poisson process models with infinite dimensional parameter spaces.
The results suggest that decision theoretic approach is also useful for other infinite dimensional problems.

\subsection{Bayes estimators and Bayesian predictive densities}

Let $\lambda(u)$ be a positive function of $u \in U$ satisfying $\int_U \lambda(u) \dd u < \infty$.
We observe $\xx = (\xnum, x_1, x_2, \ldots, x_\xnum)$
distributed according to the nonhomogeneous Poisson process $\Po(s\lambda)$ with intensity function $s \lambda$ ($s>0$).
Here, $\xnum$ is the number of the observed points and $x_1,\ldots,x_\xnum$ are observed points in $U$.
We introduce a known constant $s>0$ for later use.
Let $X(B) := \# \{ x_i \mid x_i \in B\ (1 \leq i \leq N)  \}$ and $\lambda(B) := \int_{B} \lambda(u) \dd u$
for $B \subset U$.
If $X(B_i)$  $(i=1,\ldots,k)$ are independently distributed according to the Poisson distribution with mean
$s\lambda(B_i)$ for an arbitrary partition $(B_1,\ldots,B_k)$ of $U$,
$\xx$ is said to be distributed according to the nonhomogeneous Poisson distribution
with intensity function $s \lambda$.
The function $\lambda$ is the unknown parameter.

The likelihood for the nonhomogeneous Poisson process model is given by
\begin{align}
\label{likelihood}
\biggl\{ \prod_{i=1}^\xnum & s \lambda(x_i) \biggr\}
\exp \biggl\{ - s \int \lambda(x) \dd x \biggr\}
=
\bigl\{ \prod_{i=1}^\xnum \blambda(x_i) \bigr\}
(s \masslambda)^\xnum \exp \bigl( - s \masslambda \bigr)  \\
&\propto
\biggl\{ \prod_{i=1}^\xnum \blambda(x_i) \biggr\}
\frac{(s \masslambda)^\xnum}{\xnum!} \exp \bigl( - s \masslambda \bigr)
\label{ldecomp}
=: p_\lambda(\xx),
\end{align}
where
$w = |\lambda| := \int_U \lambda(u) \dd u$ and $\overline{\lambda} := \lambda/w$
\citep[see e.g.][p.\ 22]{DV:book03}.
We identify $\lambda$ with $(\masslambda, \blambda)$.
The probability density \eqref{ldecomp} of the observed points
$(x_1,\ldots,x_\xnum)$ multiplied by $\xnum!$ coincides with \eqref{likelihood}.
These two representations do not make essential differences in the following discussions.
We mainly use \eqref{ldecomp} in the following.

Let $\yy = (\ynum, y_1, y_2, \ldots, y_\ynum)$ be
a sample independent of $\xx$ from the nonhomogeneous Poisson process $\mathcal{P\hspace{-1.7pt}o}(t \lambda)$,
where $t>0$ is a known constant.
We investigate estimation of $\lambda$ and prediction of $\yy$ using $\xx$.
In Subsection \ref{infinitesimal prediction}, estimation is formulated as a limit of prediction.

First, we consider estimation of $\lambda$.
From \eqref{ldecomp},
the Kullback--Leibler divergence from the probability density $p_\lambda(\yy)$
corresponding to the intensity $t\lambda$
to another probability density  $p_{\lambda'}(\yy)$
corresponding to the intensity $t\lambda'$ is
\begin{align}
D( p_\lambda(\yy), p_{\lambda'}(\yy))
=&\ \E_\lambda \log
\frac{\biggl\{\displaystyle \prod_{i=1}^\ynum \blambda(y_i) \biggr\}
\frac{(t \masslambda)^\ynum}{\ynum!} \exp \bigl( -t \masslambda \bigr)}
{\displaystyle \biggl\{ \prod_{i=1}^\ynum \overline{\lambda'}(y_i) \biggr\}
\frac{(t \masslambda')^\ynum}{\ynum!} \exp \bigl( -t \masslambda' \bigr)} \notag \\
=&\
\E_\lambda \left[ t \masslambda' - t \masslambda + \ynum \log \frac{\masslambda}{\masslambda'}
+ \sum_{i=1}^\ynum \log \frac{\blambda(y_i)}{\overline{\lambda'}(y_i)} \right] \notag \\
=&\
\label{eq:KLeat0}
t \masslambda \left( \frac{\masslambda'}{\masslambda} - 1 - \log \frac{\masslambda'}{\masslambda}
+ \int \blambda(y) \log \frac{\blambda(y)}{\overline{\lambda'}(y)} \dd y \right) \\
=&\ t \int
\biggl(\lambda'(y) - \lambda(y)
+ \lambda(y) \log \frac{\lambda(y)}{\lambda'(y)} \biggr) \dd y.
\label{eq:KLeat}
\end{align}

Suppose that a prior density $\pi(\dd \lambda)$ is adopted and observation $\xx$ is given.
Let $p_\pi(\dd \lambda \mid \boldsymbol{x})$ be the posterior distribution.
If the posterior mean of $\lambda$ has a density $\lambda_{\pi,\xx}(u)$ with respect to the Lebesgue measure on $U$,
the posterior mean of the Kullback--Leibler loss of an intensity estimator $\widehat{\lambda}$ is
\begin{align*}
\int
D( p_\lambda(\yy), p_{\lambda'}(\yy))\, p_\pi(\dd \lambda \mid \xx)
&= t \iint
\biggl(\hlambda(y) - \lambda(y)
+ \lambda(y) \log \frac{\lambda(y)}{\hlambda(y)} \biggr) \dd y \, p_\pi(\dd \lambda \mid \boldsymbol{x}) \\
&= t \iint \biggl\{
\biggl(\widehat{\lambda}(y) - \lambda_{\pi,\xx}(y) + \lambda_{\pi,\xx}(y)
\log \frac{\lambda_{\pi,\xx}(y)}{\widehat{\lambda}(y)}\biggr) \\
&\;\;\;\;\;\;+ \biggl(\lambda_{\pi,\xx}(y) - \lambda(y) + \lambda(y) \log \frac{\lambda(y)}{\lambda_{\pi,\xx}(y)}\biggr)
\biggr\} p_\pi(\dd \lambda \mid \boldsymbol{x}) \dd y
\end{align*}
and is minimized when $\widehat{\lambda} = \lambda_{\pi,\xx}$.
Here, we assume that the integral exists.
Thus, the Bayes estimator of $\lambda$
is the posterior mean $\lambda_{\pi,\xx}$ given observation $\xx$.

If the posterior is decomposed as $p_\pi(\dd w \, \dd \blambda \mid \xx) = p_\pi(\dd w \mid \xx) p_\pi(\dd \blambda \mid \xx)$,
then the Bayes estimators of $w$, $\blambda$, and $\lambda$ based on the prior $\pi$ are given by
\begin{align}
w_{\pi,\xx} &= \int w \, p_\pi(\dd w \mid \xx),\ \ 
\blambda_{\pi,\xx} = \int \blambda \, p_\pi(\dd \blambda \mid \xx), \ \mathrm{and} \ \
\lambda_{\pi,\xx} = w_{\pi,\xx} \blambda_{\pi,\xx}, \notag
\end{align}
respectively.

Next, we consider predictive densities of $\yy$.
The Kullback--Leibler divergence from the probability density $p_\lambda(\yy)$
corresponding to the intensity $t\lambda$
to another probability density $q(\yy) = q(\ynum,y_1,\ldots,y_\ynum) = q(\ynum)q(y_1,\ldots,y_\ynum \mid \ynum)$ is
\begin{align}
D(p_\lambda(\yy),q(\yy))
&= \E_\lambda \left[ \log \frac
{p^\mathrm{Po}_{t\masslambda}(\ynum) \prod_{i=1}^\ynum \overline{\lambda}(y_i)}
{q(\ynum, y_1,\ldots,y_\ynum)} \right],
\end{align}
where $p^\mathrm{Po}_{t\masslambda}(\ynum) := \{(tw)^M/M!\}\exp(-tw)$ 
denotes the Poisson probability density with mean $t\masslambda$.

Since the posterior mean of the Kullback--Leibler divergence is
\begin{align}
&\int D(p_\lambda(\yy),q(\yy;\xx)) p_\pi(\dd \lambda \mid \xx) \notag \\
&=
\iint \sum_{\ynum=0}^\infty \idotsint p^\mathrm{Po}_{t \masslambda}(\ynum)
\Bigl\{\prod_{i=1}^\ynum \overline{\lambda}(y_i)\Bigr\}
\log \frac{p^\mathrm{Po}_{t \masslambda}(\ynum) \prod_{i=1}^\ynum \overline{\lambda}(y_i)}
{q(\ynum,y_1,\ldots,y_\ynum)} \dd y_1 \cdots \dd y_\ynum
p_\pi(\dd w \, \dd \blambda \mid \xx),
\label{pm-KL}
\end{align}
the Bayesian predictive density minimizing \eqref{pm-KL} is
\begin{align}
p_\pi(\ynum,y_1,\ldots,y_\ynum \mid \xx) =
\iint p^\mathrm{Po}_{t \masslambda}(\ynum) \Bigl\{\prod_{i=1}^\ynum \overline{\lambda}(y_i)\Bigr\}
p_\pi(\dd w \, \dd \blambda \mid \xx). \notag
\end{align}

If the posterior is decomposed as $p_\pi(\dd w \, \dd \blambda \mid \xx) = p_\pi(\dd w \mid \xx) p_\pi(\dd \blambda \mid \xx)$,
then the Bayesian predictive density based on the prior $\pi$ is given by
\begin{align}
p_\pi(\ynum,y_1,\ldots,y_\ynum \mid \xx) = p_\pi(\ynum \mid \xx) p_\pi(y_1,\ldots,y_\ynum \mid \ynum, \xx),
\end{align}
where
\begin{align}
p_\pi(\ynum \mid \xx)
&= \int p^\mathrm{Po}_{t \masslambda}(\ynum) p_\pi(\dd w \mid \xx) \notag
\end{align}
and
\begin{align}
p_\pi(y_1,\ldots,y_\ynum \mid \ynum, \xx) =
\int \Bigl\{\prod_{i=1}^\ynum \overline{\lambda}(y_i)\Bigr\}
p_\pi(\dd \blambda \mid \xx). \notag
\end{align}
\subsection{Kernel mixture models}\label{Kernel}
We need to adopt a prior for $\lambda$ in order to use Bayesian methods.
First, we consider the gamma process prior $\Ga(\alpha, \beta)$,
where $\alpha(u)$ is a positive density function of $u$ and $\beta$ is a positive scalar that does not depend on $u$.
Then, for an arbitrary partition $(B_1,\ldots,B_k)$ of $U$,
$\mu_i := \mu(B_i)$ $(i=1,\ldots,k)$ are independently distributed according to the gamma distributions
$\mathrm{Ga}(\alpha_i,\beta)$ with densities
$\{1/\Gamma(\alpha_i)\} (\mu_i^{\alpha_i-1}/\beta^{\alpha_i}) \exp(-\mu_i/\beta)$,
where $\alpha_i := \alpha(B_i) = \int_{B_i} \alpha(u) \dd u$.
The mixture of the nonhomogeneous Poisson process $\Po(t \lambda)$
with respect to the prior
$\mathcal{G \hspace{-1.7pt} a}(\alpha, \beta)$ for $\lambda$ is 
the negative binomial process $\mathcal{N\hspace{-2.75pt}e\hspace{-1.7pt}B\hspace{-1.7pt}i}(\alpha, t \beta/(1 + t \beta))$.
For $\mathcal{N\hspace{-2.75pt}e\hspace{-1.7pt}B\hspace{-1.7pt}i}(\alpha, t \beta/(1 + t \beta))$
and an arbitrary partition $(B_1,\ldots,B_k)$ of $U$,
the numbers $N_i$ of points in $B_i$ $(i=1,\ldots,k)$ are independently distributed according
to the negative binomial distribution with density ${N_i + \alpha_i - 1 \choose \alpha_i}
 \{t \beta/(1 + t \beta)\}^{N_i} \{1/(1 + t \beta)\}^{\alpha_i}$.
The posterior with respect to the prior $\mathcal{G \hspace{-1.7pt} a}(\alpha, \beta)$ and observation $\xx$ is
$\Ga(\alpha + \sum_i \delta_{x_i}, 1/(s+1/\beta))$ \citep[see][]{LoWeng89}.
Then, the posterior means of $w$, $\blambda$, and $\lambda$ are
\begin{align*}
 w_{\alpha,\beta,\xx} = \frac{\beta}{1+s\beta} (|\alpha| + \xnum),~
 \blambda_{\alpha,\beta,\xx} = \frac{\alpha + \sum_i \delta_{x_i}}{|\alpha| + \xnum},~
\mbox{~and~~~}~
 \lambda_{\alpha,\beta,\xx} = \frac{\beta}{1+s\beta} (\alpha + \sum_i \delta_{x_i}),
\end{align*}
respectively,
where $\delta_{x_i}$ denotes the Dirac measure at $x_i$.
Thus, the Bayesian predictive density based on the gamma process prior $\Ga(\alpha, \beta)$ and observation $\xx$ is
\begin{align*}
\NeBi \biggl( \alpha + \sum_i \delta_{x_i}, \frac{(s + 1/\beta)^{-1} t}{1 + (s + 1/\beta)^{-1} t} \biggr)
&= \NeBi \biggl( \alpha + \sum_i \delta_{x_i}, \frac{t \beta}{ 1 + (s + t) \beta} \biggr).
\end{align*}

Although the gamma process prior is a conjugate prior for the nonhomogeneous Poisson model,
it is not natural to use this prior directly for $\lambda$
because the measure $\alpha + \sum_i \delta_{x_i}$ is not absolutely continuous with respect to the intensity measure $\lambda$.
In fact,
the Kullback--Leibler loss of the posterior mean $\lambda_{\alpha,\beta,\xx}$ with respect to
observation $\xx$ and the gamma prior $\Ga(\alpha,\beta)$ is
\begin{align*}
D(t \lambda, t \lambda_{\alpha,\beta,\xx})
=&\
t \masslambda \left\{ \frac{\masslambda_{\alpha,\beta}}{\masslambda} - 1 - \log \frac{\masslambda_{\alpha,\beta}}{\masslambda}
+ \int \blambda(y) \log \frac{\blambda(y)}{\overline{\lambda}_{\alpha,\beta,\xx}(y)} \dd y \right\} \\
=&\
t \masslambda \left\{ \frac{\masslambda_{\alpha,\beta}}{\masslambda} - 1 - \log \frac{\masslambda_{\alpha,\beta}}{\masslambda}
+ \int \blambda(y) \log \frac{\blambda(y)}{\overline{\alpha}(y)} \dd y + \log \biggl( 1+ \frac{\xnum}{|\alpha|} \biggr) \right\},
\end{align*}
where $|\alpha| := \int_U \alpha(u) \dd u$ and $\overline{\alpha} := \alpha/|\alpha|$.
The amount of information concerning $\lambda$ included in $\xx$ increases as $s$ increases.
However, when $s$ goes to infinity
and $t$ is fixed, $\xnum$ and the divergence $D(t \lambda, t \lambda_{\alpha,\beta})$ diverges to infinity.
Here, $w_{\alpha, \beta}$ converges to $w$ with probability $1$.
However, $\overline{\lambda}_{\alpha,\beta,\xx}$ does not converge to $\blambda$ in the Kullback--Leibler sense,
although $\overline{\lambda}_{\alpha,\beta,\xx}$ weakly converges to $\blambda$.

In order to overcome the difficulty caused by the fact that $\alpha + \sum_i \delta_i$ is not absolutely continuous
with respect to $\lambda$, kernel mixture models are widely used.
A nonnegative function $k: U \times U \rightarrow \mathbb{R}_{\geq 0}$
satisfying $\int k(y,u) \dd y = 1$ is called a kernel function.
Suppose that an intensity function $\lambda$ is represented by
\begin{align*}
\lambda(y) &= k(y,\mu) := \int k(y,u) \mu(\dd u),
\end{align*}
where $\mu(\dd u)$ is a finite measure on $U$.
Then,
\begin{align*}
\masslambda &= \abslambda
:= \int \lambda(y) \dd y = \iint k(y,u) \mu(\dd u) \dd y = \int \mu(\dd y) = |\mu|.
\end{align*}
We identify $\mu$ with $(\masslambda, \bmu)$, where $\overline{\mu} := \mu/|\mu|$.
Then, we have
\begin{align*}
\blambda(y) &= \frac{\lambda(y)}{\abslambda} = k(y,\overline{\mu}) = \int k(y,u) \overline{\mu}(\dd u).
\end{align*}

We hereinafter consider kernel mixture models because these models are reasonable under the Kullback--Leibler loss.
The results in the following can be generalized to the setting in which the kernel function has an unknown parameter
by introducing a prior for the unknown parameter.

\vspace{0.4cm}

\noindent
Example. The Gaussian kernel
\begin{align*}
 k(y,u) = \frac{1}{\sqrt{2 \pi \sigma^2}} \exp \biggl\{ - \frac{1}{2 \sigma^2} (y-u)^2 \biggr\} \hspace{8mm} (y,u \in \mathbb{R})
\end{align*}
is frequently used in applications.
For simplicity, we assume that $\sigma > 0$ is known.
\qed

\vspace{0.4cm}

Assume a gamma process prior $\Ga(\alpha,\beta)$ for $\mu$.
Then, $\overline{\mu} := \mu/\masslambda$ is distributed according to the Dirichlet process $\Di(\alpha)$
\citep[see e.g.][p.~96]{GR03}.
For every partition $(B_1,\ldots,B_k)$ of $U$,
$(\overline{\mu}(B_1),\ldots,\overline{\mu}(B_k))$
is distributed according to the $k$-dimensional Dirichlet distribution $\mathrm{Di}(\alpha(B_1),\ldots,\alpha(B_k))$.
The weight parameter $\masslambda$ is distributed according to the gamma distribution
$\mathrm{Ga}(|\alpha|,\beta)$ independently of $\overline{\mu}$.
Thus, the simultaneous distribution of $w$ and $\overline{\mu}$ is given by
\begin{align}
 p^{\Di}_\alpha(\dd \overline{\mu}) \, p^\mathrm{Ga}_{|\alpha|,\beta}(\masslambda) \, \dd \masslambda,
\label{gammaprior}
\end{align}
where $p^{\Di}_\alpha(\dd \overline{\mu})$ denotes the Dirichlet process measure and
$p^\mathrm{Ga}_{|\alpha|,\beta}(\masslambda) = \{1/\Gamma(|\alpha|)\} (w^{|\alpha|-1}/\beta^{|\alpha|}) \exp (-w/\beta)$
is the gamma probability density.

\subsection{Improper priors}
It is difficult to determine the scale parameter $\beta$ from the viewpoint of objective Bayes.
Let $c$ be an arbitrary positive constant for time scale change.
Then, $\Po(s\lambda) = \Po(\tilde{s}\tilde{\lambda})$,
where $\tilde{s} := cs$ and $\widetilde{\lambda} := \lambda/c$.
Inference for $\lambda$ is equivalent to inference for $\widetilde{\lambda}$
because the time scale change does not affect the essence of the problem.
Thus, the objective prior should be (relatively) invariant with respect to the time scale change.
However, the gamma process prior $\Ga(\alpha,\beta)$ is not relatively invariant if $\beta$ is finite.
One method by which to construct an invariant posterior is to adopt the improper prior
\begin{align}
\pi_\alpha(\mu) \dd \mu = p^{\Di}_\alpha(\dd \overline{\mu}) \, p^\mathrm{Ga}_{|\alpha|,\infty}(\masslambda) \, \dd \masslambda,
\label{gammalimit}
\end{align}
which could be intuitively denoted by $\mu^{\alpha-1} \dd \mu$,
that is obtained by taking the limit $\beta \rightarrow \infty$.
The posterior based on the prior $\pi_\alpha$ is invariant with respect to the time scale change.
The prior \eqref{gammalimit} is a natural generalization of the improper prior
$\prod_{i=1}^d (\mu_i^{\alpha_i-1} \dd \mu_i)$ discussed by \cite{K:AS2004}
for the finite-dimensional independent Poisson model.

Here, we consider a generalization of the gamma process prior \eqref{gammaprior}.
Let
\begin{align}
\pi_{\alpha,\beta,\gamma}(\dd \mu)
:= p^{\Di}_\alpha(\dd \overline{\mu}) \, p^\mathrm{Ga}_{|\alpha|-\gamma,\beta}(\masslambda) \, \dd \masslambda,
\label{extgammaprior}
\end{align}
where $\gamma < |\alpha|$.
We consider
\begin{align}
\label{ourpriormu}
 \pi_{\alpha,\gamma}(\dd \mu) := \pi_{\alpha,\beta=\infty,\gamma}(\dd \mu)
=  p^{\Di}_\alpha(\dd \overline{\mu}) \, p^\mathrm{Ga}_{|\alpha|-\gamma,\infty}(\masslambda) \, \dd \masslambda,
\end{align}
which is a generalization of \eqref{gammalimit},
by taking the limit $\beta \rightarrow \infty$.
We denote the distributions of $\lambda(u) = \int k(u,v) \mu(\dd v)$
corresponding to \eqref{extgammaprior} and \eqref{ourpriormu} by $\pi_{\alpha,\beta,\gamma}(\dd \lambda)$
and $\pi_{\alpha,\gamma}(\dd \lambda)$, respectively,
by abuse of notation without confusion.

We investigate Bayesian inference based on the prior $\pi_{\alpha,\beta,\gamma}$.
From \eqref{ldecomp} and \eqref{extgammaprior},
the posterior distribution of $\mu$ with respect to prior $\pi_{\alpha,\beta,\gamma}$
and observed data $\xx$ is proportional to
\begin{align}
\biggl\{ \prod_{i=1}^\xnum & k(x_i,\overline{\mu}) \biggr\}
p^{\Di}_\alpha(\dd \overline{\mu})
\frac{(s\masslambda)^\xnum}{\xnum!} \exp \bigl( -s\masslambda \bigr)
p^\mathrm{Ga}_{|\alpha|-\gamma,\beta}(\masslambda) \, \dd \masslambda \notag \\
&\propto\
\biggl\{ \prod_{i=1}^\xnum k(x_i,\overline{\mu}) \biggr\}
p^{\Di}_\alpha(\dd \overline{\mu})
p^\mathrm{Ga}_{|\alpha|-\gamma+\xnum,\beta/(s\beta+1)}(\masslambda) \, \dd \masslambda.
\label{posterior}
\end{align}
Thus, the posterior and the Bayesian predictive density for the kernel mixture models
have more complex forms than those for the simple Gamma--Poisson processes.

The posterior mean of $\blambda$ given $\xx$ is
\begin{align}
\blambda_{\alpha, \boldsymbol{x}}(y)
= \frac{\lambda_{\alpha, \beta, \gamma, \xx}(y)}{|\lambda_{\alpha, \beta, \gamma, \boldsymbol{x}}|}
=&\ \frac{\int k(y,\overline{\mu})
\bigl\{ \prod_{i=1}^\xnum k(x_i,\overline{\mu}) \bigr\} p^{\Di}_\alpha(\dd \overline{\mu}) }
{\int \bigl\{ \prod_{i=1}^\xnum k(x_i,\overline{\mu}) \bigr\} p^{\Di}_\alpha(\dd \overline{\mu}) },
\end{align}
not depending on $\beta$, $\gamma$, or $s$.
For precise treatment of quantities related to
posterior distributions based on disintegration, see e.g. \citet{AS:James2005}.

The posterior means of $w$ and $\lambda$ are
\begin{align}
\masslambda_{\absalpha-\gamma,\beta,\xnum}
= \lvert \lambda_{\alpha,\beta,\gamma, \boldsymbol{x}} \rvert
=&\ \frac{1}{s+1/\beta}(\absalpha - \gamma + \xnum)
\label{pomeanw}
\end{align}
and
\begin{align}
\lambda_{\alpha,\beta,\gamma, \boldsymbol{x}}
&= \masslambda_{\absalpha-\gamma,\beta,\xnum} \overline{\lambda}_{\alpha,\xx}
= \frac{1}{s+1/\beta}(\absalpha - \gamma + \xnum)
\blambda_{\alpha, \boldsymbol{x}},
\label{pomeanlambda}
\end{align}
respectively.
Since the posterior density of $w$ depends on $\alpha$ and $\gamma$ only through $\absalpha - \gamma$
and on $\xx=(N,x_1,\ldots,x_N)$ only through $\xnum$, we denote the posterior mean of $w$ by
$\masslambda_{\absalpha - \gamma,\beta,\xnum} := \lvert \lambda_{\alpha,\beta,\gamma, \boldsymbol{x}} \rvert$.

From \eqref{posterior},
the Bayesian predictive density is given by
\begin{align*}
p^\mathrm{NB}_{t/(t+s+1/\beta),\absalpha-\gamma+\xnum}&(\ynum) p_\alpha(y_1,\ldots,y_\ynum \mid \ynum, \xx),
\end{align*}
where
\begin{align}
p_\alpha(y_1,\ldots,y_\ynum \mid \ynum, \xx)
=&\ \frac{\int \bigl\{ \prod_{j=1}^\ynum k(y_j,\overline{\mu}) \bigr\}
\bigl\{ \prod_{i=1}^\xnum k(x_i,\overline{\mu}) \bigr\} p^{\Di}_\alpha(\dd \overline{\mu})}
{\int \bigl\{ \prod_{i=1}^\xnum k(x_i,\overline{\mu}) \bigr\} p^{\Di}_\alpha(\dd \overline{\mu})}.
\label{preddensity}
\end{align}
and
\[
p^\mathrm{NB}_{t/(s+t+1/\beta),\absalpha-\gamma+\xnum}(\ynum) =
\frac{\Gamma(\ynum+\xnum+\absalpha-\gamma)}{\ynum! \Gamma(\xnum+\absalpha-\gamma)}
\left(\frac{t}{t+s+1/\beta}\right)^\ynum
\left(\frac{s+1/\beta}{t+s+1/\beta}\right)^{\absalpha-\gamma+\xnum}.
\]
Here, $p^\mathrm{NB}_{t/(s+t+1/\beta),\absalpha-\gamma+\xnum}(\ynum)$
is the negative binomial distribution 
with success probability $t/(t+s+1/\beta)$ and number of failures $\absalpha-\gamma+\xnum$.
We can evaluate the posterior mean \eqref{pomeanlambda} of $\lambda$
and the predictive density \eqref{preddensity} using 
Markov chain Monte Carlo (MCMC) methods as discussed in Section \ref{numerical}.

By taking the limit $\beta \rightarrow \infty$,
we obtain the Bayes estimate and the Bayesian predictive density based on the improper prior $\pi_{\alpha,\gamma}$.
From \eqref{posterior}, the posterior with respect to the improper prior $\pi_{\alpha,\gamma}$ and observation $\xx$ is
proportional to
\begin{align*}
\biggl\{ \prod_{i=1}^\xnum & k(x_i,\overline{\mu}) \biggr\} 
p^{\Di}_\alpha(\dd \overline{\mu})
\frac{(s\masslambda)^\xnum}{\xnum!} \exp \bigl( -s\masslambda \bigr)
p^\mathrm{Ga}_{|\alpha|-\gamma,\infty}(\masslambda) \, \dd \masslambda \\
&\propto
\biggl\{ \prod_{i=1}^\xnum k(x_i,\overline{\mu}) \biggr\}
p^{\Di}_\alpha(\dd \overline{\mu})
p^\mathrm{Ga}_{|\alpha|-\gamma+\xnum,1/s}(\masslambda) \, \dd \masslambda.
\end{align*}

The posterior means of $\blambda$, $w$ and $\lambda$ with the improper prior $\pi_{\alpha,\gamma}$ are
\begin{align}
\blambda_{\alpha, \boldsymbol{x}}(y)
=&\ \frac{\int k(y,\overline{\mu})
\bigl\{ \prod_{i=1}^\xnum k(x_i,\overline{\mu}) \bigr\} p^{\Di}_\alpha(\dd \overline{\mu}) }
{\int \bigl\{ \prod_{i=1}^\xnum k(x_i,\overline{\mu}) \bigr\} p^{\Di}_\alpha(\dd \overline{\mu}) },
\label{posteriormean}
\end{align}
\begin{align}
\masslambda_{\absalpha-\gamma,\xnum} =&\ \lvert \lambda_{\alpha,\gamma, \boldsymbol{x}} \rvert
= \frac{1}{s} (\absalpha - \gamma + \xnum),
\label{pomeanw2}
\end{align}
and
\begin{align}
\lambda_{\alpha,\gamma, \boldsymbol{x}}
= \masslambda_{\absalpha-\gamma,\xnum} \blambda_{\alpha, \boldsymbol{x}}
= \frac{1}{s} (\absalpha - \gamma + \xnum) \blambda_{\alpha, \boldsymbol{x}},
\label{pomeanlambda2}
\end{align}
respectively.

The Bayesian predictive density with the improper prior $\pi_{\alpha,\gamma}$ is given by
\begin{align*}
p^\mathrm{NB}_{t/(t+s),\absalpha-\gamma+\xnum}&(\ynum) p_\alpha(y_1,\ldots,y_\ynum \mid \ynum, \xx),
\end{align*}
where
\begin{align*}
p^\mathrm{NB}_{t/(t+s),\absalpha-\gamma+\xnum}(\ynum) &=
\frac{\Gamma(\ynum+\xnum+\absalpha-\gamma)}{\ynum! \Gamma(\xnum+\absalpha-\gamma)}
\left(\frac{t}{t+s}\right)^\ynum
\left(\frac{s}{t+s}\right)^{\absalpha-\gamma+\xnum}.
\end{align*}

\subsection{Estimation as infinitesimal prediction}\label{infinitesimal prediction}
For the finite-dimensional independent Poisson model,
Bayesian parameter estimation under the Kullback--Leibler loss
is formulated as infinitesimal Bayesian prediction \citep{K:JMVA2006,K:JMVA15}.
We derive the corresponding results for nonhomogeneous Poisson processes.
Using this formulation,
an integral representation of the Kullback--Leibler risk of a predictive density is obtained.
This representation provides a unified framework for prediction and estimation and is a basis for later discussions.
Intuitively, for the nonhomogeneous Poisson model, estimation is infinitesimal prediction and
prediction is cumulative estimation.

Let
$\boldsymbol{z} = (\xnum+\ynum,x_1,\ldots,x_\xnum,y_1,\ldots,y_\ynum)$.
Then, the density of $\boldsymbol{z}$ is
\begin{align}
p_\lambda(\zz) &= \biggl\{\displaystyle \prod_{i=1}^{\xnum+\ynum} \blambda(z_i) \biggr\}
\frac{\{(s+t)\masslambda\}^{\xnum+\ynum}}{(\xnum+\ynum)!} \exp \bigl\{ - (s+t) \masslambda \bigr\}.
\end{align}
Since the conditional density
\begin{align}
p_\lambda(\boldsymbol{x} \mid \boldsymbol{z})
&= \frac{p_\lambda(\xx,\zz)}{p_\lambda(\zz)}
= \frac{p_\lambda(\xx)p_\lambda(\yy)}{p_\lambda(\zz)}
={\xnum+\ynum \choose \xnum} \biggl(\frac{s}{s+t}\biggr)^\xnum \biggl(\frac{t}{s+t}\biggr)^\ynum
\end{align}
does not depend on $\lambda$,
$\zz$ is a sufficient statistic when $\xx$ and $\yy$ are observed.
We denote $p_\lambda(\xx \mid \zz)$ by $p(\xx \mid \zz)$.
Let
$p_\pi(\xx, \zz) := \int p_\lambda(\xx, \zz) \pi(\dd \lambda)$ and
$p_\pi(\zz) := \int p_\lambda(\zz) \pi(\dd \lambda)$,
where $\pi(\dd \lambda)$ is a prior.
Then,
we have
\begin{align}
p_\pi(\yy \mid \xx) = p_\pi(\zz \mid \xx)
= \frac{p(\xx \mid \zz) p_\pi(\zz)}{p_\pi(\xx)}.
\label{eq:sufficiency1}
\end{align}

We consider a nonhomogeneous Poisson process $\zz_\tau = (N_\tau, z_1, \ldots, z_{N_\tau})$ distributed
according to $\Po(\tau \lambda)$ on $U$ with time $\tau \geq 0$.
When we need to explicitly specify time $\tau$, $z_i$ is denoted by $z_{\tau,i}$.
For $a,b > 0$, the difference between $\zz_{a+b} = (N_{a+b}, z_1, \ldots, z_{N_{a+b}})$
and $\zz_a = (N_a, z_1, \ldots, z_{N_a})$ is defined by
$\zz_{a+b} - \zz_a := (N_{a+b}-N_a, z_{N_a+1}, \ldots, z_{N_{a+b}})$.
For all $a,b > 0$, $\zz_a$ and $\zz_{a+b}-\zz_a$ are independently distributed according to $\Po(a \lambda)$
and $\Po(b \lambda)$,
respectively.
This spatial point process $\zz_\tau$ on $U$ is called a pure immigration process.
Then, the simultaneous distribution of $\xx$ and $\yy$ is identical to that of $\zz_s$ and $\zz_{s+t} - \zz_{s}$.
The probability density of $\zz_\tau$ is denoted by $p_{\lambda,\tau}(\zz_\tau)$.
From \eqref{eq:sufficiency1},
the Kullback--Leibler risk is represented by
\begin{align}
\E_\lambda \left[ \log \frac{p_\lambda(\yy \mid \xx)}{p_\pi(\yy \mid \xx)} \right]
&= \E_\lambda \left[ \log \frac{p_{\lambda,s+t}(\zz_{s+t})}{p_{\pi,s+t}(\zz_{s+t})} \right]
   - \E_\lambda \left[ \log \frac{p_{\lambda,s}(\zz_s)}{p_{\pi,s}(\zz_s)} \right] \notag \\
\label{KLintegral}
&= \int_s^{s+t} \frac{\partial}{\partial \tau}
\E_\lambda \left[ \log \frac{p_{\lambda,\tau}(\zz_\tau)}{p_{\pi,\tau}(\zz_\tau)} \right]
\dd \tau,
\end{align}
where $p_{\pi,\tau}(\zz_\tau) = \int p_{\lambda,\tau}(\zz_\tau) \pi(\dd \lambda)$.
Here, we assume that the expectation
$\E_\lambda \left[ \log \{p_{\lambda,\tau}(\zz_\tau)/p_{\pi,\tau}(\zz_\tau)\} \right]$
exists and is differentiable with respect to $\tau \in [s,s+t]$.

In order to evaluate the integrand of \eqref{KLintegral},
we prepare Lemma \ref{fielddifferential}, which is a generalization of
Lemma \ref{1dimdifferential} below,
which is essentially used in \citet{K:JMVA15}.
\begin{lemma}
\label{1dimdifferential}
Suppose that $N_\tau$ $(\tau \geq 0)$ is distributed according to a Poisson distribution with mean $\tau \masslambda$,
where $w$ is a fixed positive real number.
Let $h$ be a function from the nonnegative integers to the real numbers.
Assume that $\sum_{n=0}^\infty |h(n)| (\theta^n / n!) \exp(-\theta) < \infty$ for all $\theta > 0$.
Then,
\begin{align*}
\frac{\dd}{\dd \tau} \E \Bigl[ h(N_\tau) \Bigr]
&= \E \left[ \Bigl( \frac{N_\tau}{\tau}-\masslambda \Bigr) h(N_\tau) \right]
= \masslambda \, \E \Bigl[ h(N_\tau+1) - h(N_\tau) \Bigr].
\end{align*}
\end{lemma}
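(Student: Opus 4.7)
The plan is to differentiate the power-series representation of $\E[h(N_\tau)]$ in $\tau$ term by term, then read off each of the two claimed forms. Writing
\begin{align*}
\E[h(N_\tau)] = \sum_{n=0}^\infty h(n) \frac{(\tau w)^n}{n!} e^{-\tau w},
\end{align*}
a direct computation gives
\begin{align*}
\frac{\dd}{\dd \tau}\left[ \frac{(\tau w)^n}{n!} e^{-\tau w} \right] = \left( \frac{n}{\tau} - w \right) \frac{(\tau w)^n}{n!} e^{-\tau w}.
\end{align*}
Summing this against $h(n)$ and recognizing the result as an expectation yields the first identity $(\dd/\dd \tau) \E[h(N_\tau)] = \E[(N_\tau/\tau - w) h(N_\tau)]$.

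For the second equality, I would exploit the Poisson-shift identity $(n/\tau) \cdot (\tau w)^n / n! = w \cdot (\tau w)^{n-1}/(n-1)!$ for $n \geq 1$. Reindexing via $m = n - 1$ gives
\begin{align*}
\E\left[\frac{N_\tau}{\tau} h(N_\tau)\right] = w \sum_{m=0}^\infty h(m+1) \frac{(\tau w)^m}{m!} e^{-\tau w} = w\, \E[h(N_\tau + 1)],
\end{align*}
and subtracting $w\, \E[h(N_\tau)]$ rearranges to $\E[(N_\tau/\tau - w) h(N_\tau)] = w\, \E[h(N_\tau + 1) - h(N_\tau)]$.

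The one analytic point requiring care is justifying the termwise differentiation. For $\tau$ restricted to a compact interval $[a,b] \subset (0, \infty)$ and any $b' > b$, the factor $n\, (\tau w)^n / (\tau \cdot n!)$ can be dominated by $C\, (b' w)^n / n!$ for some constant $C$ (since $n (b/b')^n$ is bounded in $n$), and the summability hypothesis $\sum_n |h(n)| (b' w)^n e^{-b' w}/n! < \infty$ then furnishes uniform convergence of the differentiated series on $[a,b]$. The standard theorem on termwise differentiation of uniformly convergent sums then applies, closing the argument; this is the only mildly delicate step, the rest being bookkeeping on the Poisson mass function.
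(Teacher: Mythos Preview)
Your proof is correct and is exactly the kind of direct computation the paper has in mind: the paper does not spell out a proof of this lemma, remarking only that ``the proof is straightforward.'' Your termwise differentiation of the Poisson series, followed by the index shift $m=n-1$ to obtain the $h(N_\tau+1)$ form, is the natural argument, and your uniform bound via $n(b/b')^n$ on compact $\tau$-intervals cleanly justifies the interchange of sum and derivative.
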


\vspace{0.4cm}

The proof is straightforward.
\begin{lemma}
\label{fielddifferential}
Suppose that
$\zz_\tau = (N_\tau,z_1,\ldots,z_{N_\tau})$
is distributed according to the nonhomogeneous Poisson process $\Po(\tau \lambda)$ $(\tau \geq 0)$.

Let $h_n$ $(n=0,1,2,\ldots)$ be functions from $U^n \times \mathbb{R}_{\geq 0}$ to $\mathbb{R}$,
where $U^0 := \emptyset$ and $h_0$ is a constant.
Every function $h_n(z_1,\ldots,z_n,\tau)$ is symmetric with respect to $z_1,\ldots,z_n$
and is differentiable with respect to $\tau \geq 0$ for every fixed $z_1,\ldots,z_n$.
Let $h$ be a function from $(\cup_{n=0}^\infty U^n) \times \mathbb{R}_{\geq 0}$ to $\mathbb{R}$
defined by $h(\zz,\tau) = h(n,z_1,\ldots,z_n,\tau) := h_n(z_1,\ldots,z_n,\tau)$, where $\zz=(n,z_1,\ldots,z_n)$.
Assume that $f(r,s) := \E [h(\zz_r,s)]$ exists and is differentiable with respect to $(r,s) \in \mathbb{R}^2_{>0}$
in a neighborhood of every $r=s>0$
and that $\frac{\partial}{\partial s}\E[h(\zz_r,s)] = \E[\frac{\partial}{\partial s} h(\zz_r,s)]$.

Then,
\begin{align}
\frac{\dd}{\dd \tau} \E \bigl[ h(\zz_\tau,\tau) \bigr]
=&\ \int_U \lambda(y) \E \bigl[ h(\zz_\tau + \delta_y) - h(\zz_\tau) \bigr] \dd y
+ \E \left[ \frac{\partial h}{\partial r} (\zz_\tau,r) \biggr|_{r=\tau} \right],
\label{PoissonFieldStein}
\end{align}
where $\zz_\tau + \delta_y := (N_\tau+1,z_1,\ldots,z_{N_\tau},y)$.
\end{lemma}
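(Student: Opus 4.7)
The plan is to reduce Lemma \ref{fielddifferential} to Lemma \ref{1dimdifferential} via a chain-rule decomposition. Set $f(r,s) := \E[h(\zz_r, s)]$; by hypothesis $f$ is $C^1$ in a neighborhood of the diagonal $r=s>0$, so
\begin{align*}
\frac{\dd}{\dd \tau} \E[h(\zz_\tau, \tau)] = \frac{\partial f}{\partial r}(\tau, \tau) + \frac{\partial f}{\partial s}(\tau, \tau).
\end{align*}
The second term equals $\E\bigl[\tfrac{\partial h}{\partial r}(\zz_\tau, r)\big|_{r=\tau}\bigr]$ directly from the assumed interchange of expectation and $s$-differentiation, producing the final term of \eqref{PoissonFieldStein}. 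What remains is to identify $\frac{\partial f}{\partial r}(\tau, \tau)$ with $\int_U \lambda(y)\, \E[h(\zz_\tau + \delta_y) - h(\zz_\tau)]\, \dd y$.

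For the $r$-derivative at fixed $s$, I condition on $N_r$. Since $N_r$ is Poisson with mean $r\masslambda$ (with $\masslambda = |\lambda|$) and, given $N_r = n$, the points $z_1,\ldots,z_n$ are i.i.d.\ with density $\blambda = \lambda/\masslambda$,
\begin{align*}
f(r, s) = \sum_{n=0}^\infty \frac{(r \masslambda)^n}{n!}\, e^{-r \masslambda}\, g(n, s),
\qquad
g(n, s) := \int_{U^n} \biggl\{\prod_{i=1}^n \blambda(z_i)\biggr\} h_n(z_1, \ldots, z_n, s)\, \dd z_1 \cdots \dd z_n.
\end{align*}
Finiteness of $\E[|h(\zz_r,s)|]$ (implicit in the existence of $f$ near the diagonal) gives, via Jensen's inequality applied to $g$, the absolute summability hypothesis of Lemma \ref{1dimdifferential}. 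Applying that lemma to $n \mapsto g(n, s)$ yields
\begin{align*}
\frac{\partial f}{\partial r}(r, s) = \masslambda\, \E\bigl[g(N_r + 1, s) - g(N_r, s)\bigr].
\end{align*}

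The last step is to rewrite this discrete difference as a spatial integral against $\lambda$. By the symmetry of $h_{n+1}$ in its first $n+1$ arguments,
\begin{align*}
g(n+1, s) = \int_U \blambda(y) \int_{U^n} \biggl\{\prod_{i=1}^n \blambda(z_i)\biggr\} h_{n+1}(z_1, \ldots, z_n, y, s)\, \dd z_1 \cdots \dd z_n\, \dd y,
\end{align*}
and multiplying by $\masslambda$ converts $\masslambda\, \blambda(y)$ into $\lambda(y)$. Fubini (justified by the above absolute summability) then gives $\masslambda\, \E[g(N_r + 1, s)] = \int_U \lambda(y)\, \E[h(\zz_r + \delta_y, s)]\, \dd y$, while $\masslambda\, \E[g(N_r, s)] = \int_U \lambda(y)\, \E[h(\zz_r, s)]\, \dd y$ is handled identically. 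Specializing to $r = s = \tau$ produces \eqref{PoissonFieldStein}. The main technical obstacle is precisely this Fubini/summation interchange; the symmetry hypothesis is what turns the extra integration variable into the weight $\lambda(y)$, giving the spatial intensity factor that encodes ``adding one point at $y$'' to the Poisson process.
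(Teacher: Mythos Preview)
Your proof is correct and follows essentially the same route as the paper: both use the chain rule $\frac{\dd}{\dd\tau}f(\tau,\tau)=\partial_r f+\partial_s f$, handle the $s$-derivative via the assumed interchange, and compute the $r$-derivative by differentiating the Poisson-weighted series and reindexing. The only cosmetic difference is that you package the $r$-derivative step as an explicit appeal to Lemma~\ref{1dimdifferential} (with $g(n,s)$ playing the role of $h(n)$), whereas the paper carries out that same shift-by-one computation inline; your version makes the ``Lemma~\ref{fielddifferential} generalizes Lemma~\ref{1dimdifferential}'' claim more transparent.
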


\vspace{0.4cm}
The proof is given in Appendix A.

Now, we have an integral representation of the risk of a Bayesian predictive density.
\vspace{0.4cm}
\begin{theorem}\label{PRF}
Suppose that
$\zz_\tau = (N_\tau,z_1,\ldots,z_{N_\tau})$
is distributed according to the nonhomogeneous Poisson process $\Po(\tau \lambda)$ $(\tau \geq 0)$.

Assume that $f(r,s) := \E [\log \{p_{s,\lambda}(\zz_r)/p_{s,\pi}(\zz_r)\}]$ exists
and is differentiable with respect to $(r,s) \in \mathbb{R}^2_{>0}$
in a neighborhood of every $r=s>0$
and that $\frac{\partial}{\partial s}\E[\log \{p_{s,\lambda}(\zz_r)/p_{s,\pi}(\zz_r)\}]
= \E[\frac{\partial}{\partial s} \log \{p_{s,\lambda}(\zz_r)/p_{s,\pi}(\zz_r)\}]$.

Then,
\begin{align}
\E_\lambda \left[ \log \frac{p_\lambda(\yy \mid \xx)}{p_\pi(\yy \mid \xx)} \right]
=&\ \int_s^{s+t} \E_\lambda \left[ w_{\pi,\zz_\tau,\tau} - w 
- w \log \frac{w_{\pi,\zz_\tau,\tau}}{w}
+ w \int_U \blambda (y) \log \frac{\blambda(y)}{\blambda_{\pi,\zz_\tau,\tau}(y)} \dd y \right] \dd \tau \notag \\
\label{integralrepresentation}
=&\ \int_s^{s+t} \E_\lambda \left[ \int_U \biggl\{
\lambda_{\pi,\zz_\tau,\tau}(y) - \lambda(y)
+ \lambda(y) \log \frac{\lambda(y)}{\lambda_{\pi,\zz_\tau,\tau}(y)} \biggr\} \dd y \right] \dd \tau.
\end{align}
\end{theorem}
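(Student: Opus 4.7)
The plan is to start from the integral representation \eqref{KLintegral} and apply Lemma \ref{fielddifferential} to the integrand
$h(\zz_\tau,\tau) = \log \{ p_{\lambda,\tau}(\zz_\tau)/p_{\pi,\tau}(\zz_\tau) \}$,
so the theorem reduces to computing the two right-hand terms of \eqref{PoissonFieldStein} explicitly and rearranging.

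First I would handle the ``jump'' term $\int_U \lambda(y) \E[h(\zz_\tau+\delta_y,\tau)-h(\zz_\tau,\tau)]\dd y$. Using the Poisson density
$p_{\lambda,\tau}(\zz_\tau)=\{\prod_{i=1}^{N_\tau}\blambda(z_i)\}\{(\tau w)^{N_\tau}/N_\tau!\}e^{-\tau w}$ from \eqref{ldecomp}, the ratio telescopes to
$p_{\lambda,\tau}(\zz_\tau+\delta_y)/p_{\lambda,\tau}(\zz_\tau) = \tau w\blambda(y)/(N_\tau+1) = \tau\lambda(y)/(N_\tau+1)$.
Integrating against $\pi(\dd\lambda)$ relative to the marginal gives the analogous identity for $p_{\pi,\tau}$ with $\lambda(y)$ replaced by the posterior mean $\lambda_{\pi,\zz_\tau,\tau}(y)$. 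Taking the logarithmic difference, the $\tau/(N_\tau+1)$ factors cancel and the jump term collapses to
$\int_U \lambda(y)\,\E_\lambda\!\left[\log\{\lambda(y)/\lambda_{\pi,\zz_\tau,\tau}(y)\}\right]\dd y$.

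Next I would compute the ``time-derivative'' term $\E[\partial h(\zz_\tau,r)/\partial r|_{r=\tau}]$. Direct differentiation of $\log p_{\lambda,r}(\zz_\tau)$ yields $N_\tau/r - w$, while the same computation for $\log p_{\pi,r}(\zz_\tau)$ yields $N_\tau/r - w_{\pi,\zz_\tau,r}$, since the derivative commutes with the prior average to produce the posterior mean of $w$. The $N_\tau/\tau$ terms cancel in the difference and one is left with $\E_\lambda[w_{\pi,\zz_\tau,\tau}-w]$. Adding this to the jump term produces the integrand
$\E_\lambda[w_{\pi,\zz_\tau,\tau}-w] + \int_U\lambda(y)\E_\lambda[\log\{\lambda(y)/\lambda_{\pi,\zz_\tau,\tau}(y)\}]\dd y$,
which, after rewriting $w_{\pi,\zz_\tau,\tau}-w = \int_U\{\lambda_{\pi,\zz_\tau,\tau}(y)-\lambda(y)\}\dd y$, matches the second display of \eqref{integralrepresentation}. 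The first display then follows by splitting $\lambda = w\blambda$ and $\lambda_{\pi,\zz_\tau,\tau} = w_{\pi,\zz_\tau,\tau}\blambda_{\pi,\zz_\tau,\tau}$, exactly as in the passage from \eqref{eq:KLeat} to \eqref{eq:KLeat0}.

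The main obstacle is not algebraic but analytic: verifying the regularity hypotheses needed to invoke Lemma \ref{fielddifferential}, in particular the existence and differentiability of $f(r,s) = \E_\lambda[\log\{p_{s,\lambda}(\zz_r)/p_{s,\pi}(\zz_r)\}]$ near the diagonal $r=s$ and the interchange $\partial_s\E = \E\partial_s$. These are assumed outright in the statement, so under those assumptions the remaining work is purely the two bookkeeping computations above; the decomposition of the jump ratio into a posterior-mean intensity is the conceptual step that makes the identity transparent.
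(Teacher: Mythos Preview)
Your proposal is correct and follows essentially the same route as the paper: apply Lemma~\ref{fielddifferential} to $h(\zz_\tau,\tau)=\log\{p_{\lambda,\tau}(\zz_\tau)/p_{\pi,\tau}(\zz_\tau)\}$, then compute the jump term and the $\partial/\partial r$ term separately. The only cosmetic difference is that the paper first rewrites $h(\zz_r,s)=-\log\int\{\prod_i \lambda'(z_{r,i})/\lambda(z_{r,i})\}\exp(-s w')\,\pi(\dd\lambda')-sw$ before differentiating, whereas you keep the numerator and denominator of the log-ratio separate and use the telescoping ratio $p_{\lambda,\tau}(\zz_\tau+\delta_y)/p_{\lambda,\tau}(\zz_\tau)=\tau\lambda(y)/(N_\tau+1)$; both yield the same two contributions $\int_U\lambda(y)\log\{\lambda(y)/\lambda_{\pi,\zz_\tau,\tau}(y)\}\,\dd y$ and $w_{\pi,\zz_\tau,\tau}-w$.
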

\vspace{0.4cm}

The proof is given in Appendix A.

Since the integrand \eqref{integralrepresentation}
multiplied by $t$ coincides with the Kullback--Leibler loss \eqref{eq:KLeat} 
for the intensity estimator $\lambda_{\pi,\zz,\tau}$,
Bayes estimation under the Kullback--Leibler loss for the nonhomogeneous Poisson model
can be regarded as infinitesimal Bayesian prediction
as in the finite-dimensional independent Poisson model. 

When the prior is decomposable as $\pi(\dd \lambda) = \pi(\dd w) \pi(\dd \overline{\lambda})$,
the assumption in Theorem \ref{PRF} and Theorem \ref{domination} in the next subsection
can be easily verified in many problems.
If $\pi(\dd \lambda) = \pi(\dd w) \pi(\dd \overline{\lambda})$, then
\begin{align}
f(r,s)&=
\E_\lambda \left[
\log
\frac
{\prod^{N_r}_{i=1} \lambda (z_{r,i}) \exp ( -s \masslambda )} 
{\int \prod^{N_r}_{j=1} \tlambda (z_{r,j}) \exp ( -s \tmasslambda ) \pi ( \dd \tlambda)}
\right] \notag \\
&= \E_\lambda \left[
\log
\frac
{\prod^{N_r}_{i=1} \masslambda^{N_r} \exp ( -s \masslambda )} 
{\int \prod^{N_r}_{j=1} \tmasslambda^{N_r} \exp ( -s \tmasslambda ) \pi ( \dd \tmasslambda)}
\right]
+
\E_\lambda \left[
\log
\frac
{\prod^{N_r}_{i=1} \blambda (z_{r,i})} 
{\int \prod^{N_r}_{j=1} \btlambda (z_{r,j}) \pi ( \dd \btlambda)}
\right].
\label{decomposition}
\end{align}

The first term in \eqref{decomposition} can be explicitly evaluated for our priors.
The second term in \eqref{decomposition} is represented by
\begin{align}
h(r) := \E_\lambda \left[\log \biggl\{ 
\frac{\prod^n_{j=1} \overline{\lambda}(z_j)}
{\int \prod^n_{k=1} \overline{\lambda}_{\overline{\mu}} (z_k) \pi ( \dd \overline{\mu} )} \biggr\} \right]
= \exp (- rw) \sum^\infty_{n = 0} \frac{(rw)^n}{n!} c(n),
\label{power1}
\end{align}
where
\[
c(n) := \idotsint 
\biggl\{ \prod^n_{i=1} \overline{ \lambda } ( z_i) \biggr\}
\log \biggl\{ 
\frac{\prod^n_{j=1} \overline{\lambda}(z_j)}
{\int \prod^n_{k=1} \overline{\lambda}_{\overline{\mu}} (z_k) \pi ( \dd \overline{\mu} )} \biggr\}
\dd z_1 \dotsb \dd z_n.
\]
Here, $c(n)$ is the cumulative Kullback--Leibler risk when we predict
$z_1,\ldots,z_n$ independently distributed according to a probability density $\blambda$
using the simultaneous Bayesian predictive density
$\int \prod^{N_r}_{j=1} \btlambda (z_{r,j}) \pi ( \dd \btlambda)$.

Therefore, in order to verify the assumption in Theorem \ref{PRF}, it is sufficient 
to show that \eqref{power1} converges for every $0 \leq r < \infty$.

In many problems, we can verify that \eqref{power1} converges for every $0 \leq r < \infty$
using the inequality
\begin{align}
0 \leq c(n) \leq n \biggl\{ \sup_{\overline{\mu} \in B} D(\overline{\lambda},\overline{\lambda}_{\overline{\mu}})
+ \int_B \pi(\dd \overline{\mu}) \biggr\},
\label{resolv}
\end{align}
where $B$ is an arbitrary measurable set in the space of intensity functions.
The inequality \eqref{resolv} is used to define the index of resolvability
by \citet{B:BS99} \citep[see also][]{BC:IEEEIT91}.
Thus, a sufficient condition is that there exists $B$ such that
$\sup_{\overline{\mu} \in B} D(\overline{\lambda},\overline{\lambda}_{\overline{\mu}}) < \infty$ and $\pi(B) > 0$.
For example, the nonhomogeneous Poisson model with the gamma prior and without a kernel discussed in Subsection \ref{Kernel}
does not satisfy this condition.

If the power series of $r$ in \eqref{power1} converges for all $0 \leq r < \infty$, we have
\begin{align}
 \frac{ \dd}{ \dd r}  h(r)& = -w \exp ( -rw) \sum^\infty_{ n=0} \frac{ (rw)^n}{n!} c (n)
+ \exp ( -rw) \sum^\infty_{n=1} \frac{ w ( r w )^{n-1}}{ ( n -1) ! } c (n) \notag \\
&= w \exp ( -rw) \sum^\infty_{n=0}  \frac{ ( r w )^{n-1}}{n!} \{ c ( n+1) - c(n) \}
\label{power2} \\
&= w \E_\lambda \left[
\int \overline{\lambda}(y)
\log
\frac
{\overline{\lambda}(y)}
{\overline{\lambda}_{\pi,\mathbf{z}_r}(y)}
\dd y 
\right], \notag
\end{align}
because
\begin{align}
c(n+1) - c(n)
&= \int \overline{ \lambda } (y)  \idotsint \Bigl\{ \prod^n_{i=1} \overline{\lambda}(z_i) \Bigr\}
\log
\left\{ 
\frac
{\overline{\lambda}(y) \prod^n_{ i=1} \overline{ \lambda } ( z_i ) }
{\int \overline{\lambda}_{\overline{\mu}} (y) \prod^n_{ i=1} \overline{ \lambda }_{ \overline{\mu} } (z_i)
\pi (\dd \overline{\mu})}
\right\}   
\dd z_1 \dotsb \dd z_n \dd y \notag \\
&\ \ \ \, -  \idotsint
\Bigl\{ \prod^n_{ i=1}  \overline{ \lambda }(z_i) \Bigr\}
 \log \left\{  
 \frac{\prod^n_{i=1}\overline{\lambda}(z_i)}
 {\prod^n_{i=1} \int \overline{ \lambda }_{ \overline{\mu} } (z_i) \pi(\dd \overline{\mu})}
 \right\}
\dd z_1 \dotsb \dd z_n \notag \\
&= \idotsint \Bigl\{ \prod^n_{ i=1} \overline{ \lambda } (z_i) \Bigr\}
\int \overline{ \lambda } (y)
\log \frac
{\overline{ \lambda } (y)}
{\overline{ \lambda }_{ \pi , z^n} (y)}
\dd y \dd z_1 \dotsb \dd z_n,
\label{individual}
\end{align}
where
\begin{align*}
\overline{ \lambda }_{\pi , z^n} (y)
= \frac{\int \prod^n_{ i=1} 
\overline{ \lambda }_{ \overline{\mu} } (z_i)
\overline{ \lambda }_{ \overline{\mu} } (y) \pi ( \dd  \overline{\mu} ) }
{\int \prod^n_{j=1} 
\overline{ \lambda }_{ \overline{\mu} } (z_j) \pi ( \dd  \overline{\mu} )}.
\end{align*}
Thus, the difference \eqref{individual} between $c(n+1)$ and $c(n)$ is the individual Kullback--Leibler risk
of the Bayesian predictive density for predicting $z_{n+1}$ using $z_1,\ldots,z_n$,
where $z_1,\ldots,z_{n+1}$ are independently distributed according to probability density $\blambda$.
The power series of $r$ in \eqref{power1} converges for $0 \leq r < \infty$
if and only if the power series in \eqref{power2} converges for $0 \leq r < \infty$.

\subsection{Shrinkage priors dominating the prior $\pi_\alpha$ and their admissibility}

We propose shrinkage priors dominating $\pi_\alpha(\dd \mu) = \mu^{\alpha-1} \dd \mu$ when $|\alpha| > 1$.
We prove admissibility of Bayes estimators and Bayesian predictive densities based on the shrinkage priors.

Although few studies on admissibility concerning infinite-dimensional models
have been carried out,
we show that Blyth's method with a convex loss is useful for our infinite-dimensional problem
because the method works even when the support of the prior is a small subset of the whole parameter space.
The key idea is to decompose the problem into two sub-problems:
a one-dimensional problem with an improper prior, and an infinite-dimensional problem with a proper prior.

\vspace{0.4cm}

\begin{theorem}\label{domination}~~
Suppose that
$\zz_\tau = (N_\tau,z_1,\ldots,z_{N_\tau})$
is distributed according to the nonhomogeneous Poisson process $\Po(\tau \lambda)$ $(\tau \geq 0)$.

Assume that $f(r,s) := \E [\log \{p_{s,\lambda}(\zz_r)/p_{s,\pi}(\zz_r)\}]$ exists and is differentiable with respect to $(r,s) \in \mathbb{R}^2_{>0}$
in a neighborhood of every $r=s>0$
and that $\frac{\partial}{\partial s}\E[\log \{p_{s,\lambda}(\zz_r)/p_{s,\pi}(\zz_r)\}]=
\E[\frac{\partial}{\partial s} \log \{p_{s,\lambda}(\zz_r)/p_{s,\pi}(\zz_r)\}]$.
\begin{enumerate}
\item 
When $|\alpha|-\gamma > 1$,
the Bayesian predictive density $p_{\alpha, \gamma}(\yy \mid \xx)$
based on
$\pi_{\alpha,\gamma}(\dd \mu)$
is dominated by the Bayesian predictive density $p_{\alpha, \tgamma}(\yy \mid \xx)$
based on $\pi_{\alpha, \tgamma}(\dd \mu)$, 
where $\tgamma := |\alpha| -1$.
\vspace{0.3cm}
\item When $|\alpha|-\gamma > 1$,
the generalized Bayes estimator of $\lambda$
based on
$\pi_{\alpha,\gamma}(\dd \mu)$
is dominated by the generalized Bayes estimator of $\lambda$
based on $\pi_{\alpha, \tgamma}(\dd \mu)$,
where $\tgamma := |\alpha| -1$.
\end{enumerate}
\end{theorem}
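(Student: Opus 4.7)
The plan is to apply the integral representation of Kullback--Leibler risk from Theorem~\ref{PRF} and reduce the domination question to a one-dimensional scalar problem for the mass $\masslambda = |\lambda|$. Both priors $\pi_{\alpha, \gamma}$ and $\pi_{\alpha, \tgamma}$ have the decomposable form \eqref{ourpriormu}, so the regularity hypotheses of Theorem~\ref{PRF} can be checked through the power series \eqref{power1} and the resolvability bound \eqref{resolv}, as noted after Theorem~\ref{PRF}. The key observation is that in the integrand of \eqref{integralrepresentation}, the angular term $\int_U \blambda(y) \log[\blambda(y)/\blambda_{\pi, \zz_\tau, \tau}(y)] \dd y$ depends on the prior only through the posterior mean $\blambda_{\pi, \zz_\tau, \tau}$, which by \eqref{posteriormean} does not depend on $\gamma$. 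Hence this angular term cancels in the risk difference of the two predictive densities, and only the scalar $\masslambda$--component contributes.

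For the reduced scalar problem, set $\masslambda_{c, \tau, N_\tau} := (c + N_\tau)/\tau$, which is the posterior mean \eqref{pomeanw2} with $s$ replaced by $\tau$ and $c = |\alpha| - \gamma$. Since $|\alpha| - \tgamma = 1$, the risk difference of Part (1) equals
\begin{align*}
\int_s^{s+t} \E_\lambda \left[\frac{|\alpha| - \gamma - 1}{\tau} - \masslambda \log \frac{|\alpha| - \gamma + N_\tau}{1 + N_\tau}\right] \dd \tau.
\end{align*}
To prove positivity, define
\begin{align*}
R(c, \masslambda, \tau) := \E_\masslambda \left[\frac{c + N_\tau}{\tau} - \masslambda - \masslambda \log \frac{c + N_\tau}{\tau \masslambda}\right],
\end{align*}
where $N_\tau \sim \Po(\tau \masslambda)$, and show that $R(c, \masslambda, \tau)$ is strictly increasing in $c$ on $[1, \infty)$ for every fixed $\masslambda, \tau > 0$. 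Differentiating under the expectation gives $\partial_c R = \tau^{-1}\bigl(1 - \tau \masslambda\, \E_\masslambda[1/(c + N_\tau)]\bigr)$, so the task reduces to the pointwise bound $\tau \masslambda\, \E_\masslambda[1/(c + N_\tau)] < 1$ for $c \geq 1$.

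This inequality follows from the identity $1/(c + n) = \int_0^1 u^{c+n-1} \dd u$: interchanging summation and integration and substituting $v = 1 - u$ yields
\begin{align*}
\tau \masslambda\, \E_\masslambda \left[\frac{1}{c + N_\tau}\right] = \int_0^1 (1-v)^{c-1} \tau \masslambda\, e^{-\tau \masslambda v} \dd v \leq \int_0^1 \tau \masslambda\, e^{-\tau \masslambda v} \dd v = 1 - e^{-\tau \masslambda} < 1,
\end{align*}
using $(1-v)^{c-1} \leq 1$ on $[0,1]$ when $c \geq 1$. Thus $R(|\alpha| - \gamma, \masslambda, \tau) > R(1, \masslambda, \tau)$ for every $\masslambda > 0$ and $\tau > 0$, and integrating over $\tau \in [s, s+t]$ gives Part (1). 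Part (2) follows by the same argument applied to the Kullback--Leibler loss \eqref{eq:KLeat0} of the generalized Bayes estimator $\lambda_{\alpha, \gamma, \xx}$: it has the same scalar--plus--angular decomposition, with angular parts identical between $\gamma$ and $\tgamma$ and scalar parts $R(|\alpha| - \gamma, \masslambda, s)$ and $R(1, \masslambda, s)$, so the same pointwise inequality yields estimator domination.

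The hard part is not the elementary inequality above but the careful verification that the hypotheses of Theorem~\ref{PRF} (existence, differentiability in $\tau$, and interchange of differentiation and expectation) hold for both improper priors. This reduces to convergence of the series \eqref{power1}, which in turn requires the kernel mixture to admit a measurable set $B$ with $\sup_{\overline{\mu} \in B} D(\blambda, \overline{\lambda}_{\overline\mu}) < \infty$ and $\pi(B) > 0$, an assumption absent for bare gamma--Poisson processes but natural for the kernel mixtures of interest.
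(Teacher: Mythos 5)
Your proposal is correct and follows essentially the same route as the paper: apply Theorem~\ref{PRF}, observe from \eqref{posteriormean} that the angular posterior mean $\blambda_{\alpha,\xx}$ is independent of $\gamma$ so the risk difference reduces to the one-dimensional problem for $\masslambda$ with posterior means $(|\alpha|-\gamma+N_\tau)/\tau$ versus $(1+N_\tau)/\tau$, and then establish positivity by an elementary Poisson inequality. The only difference is cosmetic: where the paper invokes Lemma~\ref{1dimPoisson} directly on the logarithmic difference, you differentiate the scalar risk in $c$ and bound $\tau\masslambda\,\E[1/(c+N_\tau)]\leq 1-e^{-\tau\masslambda}$, which is the same computation underlying the paper's lemma.
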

\vspace{0.4cm}

The proof is given in Appendix A.

In particular,
if $|\alpha| > 1$, the Bayesian predictive density based on
$\pi_{\alpha}(\dd \mu) = \pi_{\alpha,\gamma=0}(\dd \mu) = \mu^{\alpha-1} \dd \mu$
is dominated by the Bayesian predictive density based on $\pi_{\alpha,\tgamma=|\alpha|-1}(\dd \mu)$.

\vspace{0.4cm}
\begin{lemma}\label{1dimPoisson}
Let $X$ be a Poisson random variable with mean $\theta \geq 0$, and let $c$ be an arbitrary positive constant.
Then,
\begin{align*}
\theta \, \E_\theta& \biggl[
\log (X+1+c) - \log (X+1) \biggr]
\leq c - c \exp(-\theta) < c.
\end{align*}
\label{Poissonineq}
\end{lemma}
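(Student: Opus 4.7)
The plan is to expand the expectation explicitly as a power series in $\theta$ and then reduce the inequality to the elementary bound $\log(1+x) \le x$.

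First I would write out
\begin{align*}
\theta \, \E_\theta \bigl[ \log(X+1+c) - \log(X+1) \bigr]
= \sum_{n=0}^\infty \frac{\theta^{n+1}}{n!} e^{-\theta} \log\!\left(1+\frac{c}{n+1}\right),
\end{align*}
and re-index by $m=n+1$ to get
\begin{align*}
\sum_{m=1}^\infty m\,\frac{\theta^m}{m!} e^{-\theta} \log\!\left(1+\frac{c}{m}\right).
\end{align*}
This is the natural form to work with because the awkward factor of $\theta$ in front has been absorbed into the Poisson probabilities, and each term now has a logarithm of something close to $1$.

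The key step is then to apply $\log(1+x) \le x$ with $x=c/m > 0$, which bounds each term by $m\cdot(\theta^m/m!)e^{-\theta}\cdot(c/m) = c\,(\theta^m/m!)e^{-\theta}$. Summing from $m=1$ to $\infty$ collapses the series to $c(1-e^{-\theta})$, giving the first inequality. The strict inequality $c - c e^{-\theta} < c$ is immediate from $e^{-\theta} > 0$, which holds for every real $\theta$ (and in particular on $\theta \ge 0$, with equality $c-ce^{-\theta}=0$ only at $\theta=0$, where it is still strictly less than $c$).

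There is no real obstacle here; the only thing to be mildly careful about is the boundary case $\theta=0$, where $X=0$ almost surely and the left-hand side is $0 \cdot \log(1+c) = 0$, which is consistent with $c(1-e^{-0})=0$. Convergence of the series for all $\theta \ge 0$ is automatic because $\log(1+c/m) \le \log(1+c)$ is bounded and the Poisson weights are summable, so interchanging sum and limit causes no difficulty.
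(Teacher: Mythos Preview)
Your proof is correct and is essentially identical to the paper's: both expand the expectation as the series $\sum_{n\ge 0} e^{-\theta}\frac{\theta^{n+1}}{n!}\{\log(n+1+c)-\log(n+1)\}$, apply the elementary bound $\log(1+x)\le x$ (equivalently $\log(n+1+c)-\log(n+1)\le c/(n+1)$), and sum to obtain $c(1-e^{-\theta})$. Your re-indexing by $m=n+1$ is a cosmetic difference only.
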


\vspace{0.4cm}

The proof of Lemma \ref{1dimPoisson}, which is used in \citet{K:AS2004}, is given in Appendix A. for self-containedness.

Next, we prove admissibility of Bayes estimators and Bayesian predictive densities based on our shrinkage priors.

Suppose that the parameter space $\Lambda$ is a set of finite measures
that are mutually absolutely continuous with respect to the Lebesgue measure on $U \subset \mathbb{R}^d$.
We assume that if $\lambda = (w,\overline{\lambda}) \in \Lambda$, then $(w',\overline{\lambda}) \in \Lambda$ for all $w' > 0$.
Thus, $\Lambda = \mathbb{R}^+ \times \overline{\Lambda}$,
where $\mathbb{R}^+$ is the set of positive real numbers
and $\overline{\Lambda}$ is a set of probability densities
that are mutually absolutely continuous with respect to the Lebesgue measure.

Note that the support of a prior for the kernel mixture model only covers a small subset
of the whole parameter space $\Lambda$ if $\Lambda$ is a large set
as in ordinary settings for nonparametric inference.

Let $\mathcal{A}$ be the space of all finite measures on $U$ such that the measure is mutually absolutely continuous
with respect to the Lebesgue measure.
For estimation of $\lambda$, we choose an intensity estimate from $\mathcal{A}$.
Let $\mathcal{P}$ be the space of all probability measures
on $\cup_{m=0}^\infty U^m$
such that the marginal probability $P(m)$ of $P \in \mathcal{P}$
is positive (i.e.\;not equal to $0$) for every nonnegative integer $m$ and
the conditional probability $P(\cdot \mid m)$ on $U^m$ has density with respect to
the Lebesgue measure on $U^m \subset \mathbb{R}^m$ for every $m$.
For prediction problem, we choose a probability density from $\mathcal{P}$.

We assume that $\int k(y,u) \overline{\mu}(\dd u) \in \overline{\Lambda}$ with probability 1
if $\bmu$ is distributed according to $\Di(\alpha)$.
This condition is naturally satisfied in ordinary settings for nonparametric intensity estimation.

\vspace{0.4cm}

\begin{theorem}\label{admissible}~~~~
\begin{enumerate}
 \item 
The Bayesian predictive density based on
$\pi_{\alpha,\gamma}(\dd \mu)
:=p^{\Di}_\alpha(\dd \overline{\mu}) \, p^\mathrm{Ga}_{|\alpha|-\gamma,\infty}(\masslambda) \dd \masslambda$
is admissible under the Kullback--Leibler loss if $|\alpha| - 1 \leq \gamma < |\alpha|$.
\vspace{0.3cm}
\item
The generalized Bayes estimator $\lambda_{\alpha,\gamma,\xx}$ based on
$\pi_{\alpha,\gamma}(\dd \mu)
:=p^{\Di}_\alpha(\dd \overline{\mu}) \, p^\mathrm{Ga}_{|\alpha|-\gamma,\infty}(\masslambda) \dd \masslambda$
is admissible under the Kullback--Leibler loss if $|\alpha| - 1 \leq \gamma < |\alpha|$.
\end{enumerate}
\end{theorem}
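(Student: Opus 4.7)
The plan is to adapt the classical Blyth argument by exploiting the product structure of the prior
\[
 \pi_{\alpha,\gamma}(\dd \mu) = p^{\Di}_\alpha(\dd \overline{\mu}) \, w^{|\alpha|-\gamma-1} \dd w,
\]
which splits into a \emph{proper} infinite-dimensional Dirichlet component in $\overline{\mu}$ and an \emph{improper} one-dimensional component in $w$. The strategy is to isolate the infinite-dimensional part (handled by Bayes optimality since it is already proper) and reduce admissibility to the one-dimensional Poisson problem of Theorem 2 specialized to $d=1$.

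First I would record the two ingredients underlying the reduction. The Kullback--Leibler loss decomposes: for the estimator $\widetilde\lambda=\widetilde w\cdot\widetilde{\overline{\lambda}}$, equation \eqref{eq:KLeat0} gives $D(p_\lambda,p_{\widetilde\lambda}) = t L_1(w,\widetilde w) + t w L_2(\overline{\lambda},\widetilde{\overline{\lambda}})$ with $L_1(w,\widetilde w) := \widetilde w - w - w\log(\widetilde w/w)$; for the predictive density, writing $q(\yy\mid\xx) = q_1(\ynum\mid\xx)\,q_2(y_1,\ldots,y_\ynum\mid\ynum,\xx)$ the joint KL splits as a count-loss plus a positions-given-count loss. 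Second, the posterior \eqref{posterior} under the approximating proper prior $\pi_\beta := \pi_{\alpha,\beta,\gamma}$ factorizes as $\pi_\beta(\dd w,\dd\overline{\mu}\mid\xx) = \pi_\beta(\dd w\mid\xx)\,\pi_\beta(\dd\overline{\mu}\mid\xx)$, so the Bayes procedure is itself a product and Bayes-optimality holds separately for each sub-problem; crucially the position pieces $p_\alpha(y_1,\dots,y_\ynum\mid\ynum,\xx)$ and $\blambda_{\alpha,\xx}$ of \eqref{preddensity} and \eqref{posteriormean} do not depend on $\beta$ or $\gamma$.

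Next I assume for contradiction that some competitor dominates the Bayes procedure based on $\pi_{\alpha,\gamma}$ with strict improvement $\varepsilon>0$ at some $\lambda_0=(w_0,\overline{\lambda}_0)$. I pick a cylinder $V=(w_1,w_2)\times V_{\overline{\lambda}}\ni\lambda_0$ with $\Di(V_{\overline{\lambda}})>0$ on which the strict inequality persists with gap $\varepsilon/2$; such $V$ can be chosen because if no such cylinder existed then an almost-everywhere agreement argument, based on the Bayes marginal possessing an everywhere positive Lebesgue density on data configurations, would force the competitor to equal the Bayes procedure at Lebesgue-almost every configuration, hence equal risk at every $\lambda$, contradicting the strict improvement. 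Integrating pointwise dominance against $\pi_\beta$, substituting the loss and posterior factorizations, and applying Bayes-optimality of the position Bayes procedure (valid because the Dirichlet part of the prior is proper) cancels the $R_2$ contribution and leaves the Blyth-type inequality
\[
 (\varepsilon/2)\,\pi_\beta(V)\le \int\bigl[R_1(w,\widehat w_\infty)-R_1(w,\widehat w_\beta)\bigr]\,\pi_\beta(\dd\lambda),
\]
where $\widehat w_\infty=(|\alpha|-\gamma+\xnum)/s$ and $\widehat w_\beta=(|\alpha|-\gamma+\xnum)/(s+1/\beta)$ are the $w$-components of the two Bayes procedures and $R_1(w,\cdot)$ is the count-loss risk. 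Since the right-hand side depends only on the $w$-marginal of $\pi_\beta$, namely $w^{a-1}e^{-w/\beta}\dd w$ with $a:=|\alpha|-\gamma\in(0,1]$, this is precisely the Blyth inequality for the one-dimensional Poisson model with improper prior $w^{a-1}\dd w$.

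Finally, Theorem 2 (with $d=1$) already asserts admissibility of this one-dimensional generalized Bayes procedure for $a\in(0,1]$, and the Blyth sequence constructed in \citet{K:AS2004} makes the right-hand integral $o(\pi_\beta(V))$, contradicting the displayed inequality and completing the proof of both (1) and (2); the two parts are formally identical after the loss decomposition, since in both cases the Dirichlet contribution is neutralized by Bayes optimality and only the one-dimensional $w$-piece requires Blyth. The main obstacle I expect is the boundary case $a=1$ (i.e.\;$\gamma=|\alpha|-1$), where the naive sequence $w^{a-1}e^{-w/\beta}$ does not by itself give $o(\pi_\beta(V))$ and a Brown--Hwang-style sharpening (already built into Theorem 2) is needed; a secondary technical point is the support-of-prior issue, dispatched by the Bayes-marginal-density argument outlined above.
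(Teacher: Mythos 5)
Your proposal follows essentially the same route as the paper: Blyth's method with a strictly convex loss, exploiting the product structure $\pi_{\alpha,\gamma}(\dd\mu)=p^{\Di}_\alpha(\dd\overline{\mu})\,w^{|\alpha|-\gamma-1}\dd w$ so that the proper Dirichlet component cancels (the posterior mean of $\blambda$ and the conditional predictive density $p_\alpha(y_1,\ldots,y_\ynum\mid\ynum,\xx)$ do not depend on the $w$-part of the prior), leaving a one-dimensional Blyth computation in $w$ that coincides with the one in \citet{K:AS2004}. The reduction via Theorem \ref{PRF} and \eqref{riskdiff1}, and the observation that both parts (1) and (2) collapse to the same $w$-computation, match the paper.

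Two points need tightening. First, you set up the Blyth inequality with the gamma-damped sequence $\pi_\beta=\pi_{\alpha,\beta,\gamma}$ and then invoke the sequence "constructed in \citet{K:AS2004}" to make the right-hand side vanish; these are different sequences, and the inequality you display is not the one that vanishing is proved for. The paper commits from the outset to the Brown--Hwang sequence $\pi^{[l]}_{\alpha,\gamma}(\dd\mu)=\pi_{\alpha,\gamma}(\dd\mu)\,\tfrac12 h_l^2(w)$ and carries the Cauchy--Schwarz estimate through to the bound $O(1/\log l)$ for the whole range $0<|\alpha|-\gamma\le 1$, not only at the endpoint $\gamma=|\alpha|-1$; your remark that only the boundary case needs the sharpening is not how the argument is actually closed. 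You acknowledge this, so it is an incompleteness rather than an error, but the proof is not finished until the setup is redone with $h_l^2$. Second, your cylinder argument for localizing the strict improvement at $\lambda_0$ is fragile, since $\lambda_0$ need not lie in (a positive-mass neighborhood of) the support of the kernel-mixture prior; the paper instead passes to the midpoint $q''=(q+q')/2$ and uses strict convexity together with mutual absolute continuity of the sampling distributions to propagate the strict improvement to \emph{every} $\lambda$, after which one integrates over the fixed set $C=\{0<w<1\}\times\overline{\Lambda}$ on which $\dd\pi^{[l]}/\dd\pi=1/2$. Your "almost-everywhere agreement" fallback is gesturing at the same mechanism but should be made explicit along these lines.
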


\vspace{0.4cm}

The proof is given in Appendix A.

In particular,
if $|\alpha| > 1$, then the Bayesian predictive density based on $\pi_{\alpha,\gamma=|\alpha|-1}$
dominating that based on $\pi_{\alpha} = \pi_{\alpha,\gamma=0}$ is admissible.

\section{Numerical evaluation of predictive densities and estimators}\label{numerical}
In this section, we explore numerical methods by which to evaluate Bayesian predictive densities and Bayes estimates.

In the posterior \eqref{posterior} with respect to the prior
$\pi_{\alpha,\beta,\gamma}(\dd \mu) :=  p^{\Di}_\alpha(\dd \overline{\mu}) \, p^\mathrm{Ga}_{|\alpha|-\gamma,\beta}(\masslambda) \dd w$,
$w$ and $\overline{\mu}$ are independently distributed.
The posterior distribution of $w$ is $p^\mathrm{Ga}_{|\alpha|-\gamma+\xnum,\beta/(s\beta+1)}(\masslambda) \, \dd \masslambda$.
The posterior distribution of $\overline{\mu}$ is proportional to a Dirichlet process mixture
\begin{align}
\int & \biggl\{ \prod_{l=1}^\xnum \int k(x_l,u_l) \overline{\mu}(\dd u_l) \biggr\} p_\alpha^{\Di} (\dd \overline{\mu})
= \idotsint \biggl\{ \prod_{l=1}^\xnum k(x_l,u_l) \biggr\}
p^\mathrm{CR}_{\alpha}(\dd u_1,\ldots, \dd u_\xnum),
\label{CRrepresentation}
\end{align}
where
\begin{align*}
p^\mathrm{CR}_{\alpha}(\dd u_1,\ldots, \dd u_\xnum)
= \int \Bigl\{ \prod_{l=1}^\xnum \overline{\mu}(\dd u_l) \Bigr\} p_\alpha^{\Di} (\dd \overline{\mu})
\end{align*}
is the distribution of the Chinese restaurant process with measure $\alpha$ 
\citep[e.g.][p.~193]{Phadia16}.
If random variables $u_1,u_2,\ldots,u_\xnum$ are
distributed according to
a Chinese restaurant process $P^{\mathrm{CR}}_\alpha(\dd u_1, \ldots, \dd u_\xnum)$,
then they are sequentially distributed according to
\begin{align*}
P^{\mathrm{CR}}_\alpha(\dd u_1)
= \frac{1}{|\alpha|} \alpha(\dd u_1)
\end{align*}
and
\begin{align*}
P^{\mathrm{CR}}_\alpha(\dd u_{k+1} \mid u_1,\ldots,u_k)
= \frac{1}{|\alpha| + k} \Bigl\{ \alpha(\dd u_{k+1})
+ \sum_{i=1}^k \delta_{u_i} (\dd u_{k+1})
\Bigr\} \hspace{0.5cm} (k=1,2,\ldots,\xnum-1),
\end{align*}
see e.g.\ \citet{Phadia16}. 

We can numerically evaluate quantities concerning the posterior density of $\lambda$ such as
the Bayes estimate of $\lambda$ and Bayesian predictive density of $\yy$ given $\xx$
using MCMC based on the representation \eqref{CRrepresentation}.
Various MCMC methods for density estimation
\citep[e.g.][]{Phadia16} based on nonparametric Bayes with Dirichlet priors can be applied
to our problem.

From \eqref{posteriormean}, the posterior mean $\overline{\lambda}$ with respect to
$\pi_{\alpha,\beta,\gamma}$ and observation $\xx$ is
\begin{align}
\overline{\lambda}_{\alpha,\xx}
&=
\frac{\int k(y,\overline{\mu}) \{ \prod_{l=1}^\xnum k(x_l,\overline{\mu}) \} p_\alpha^{\Di} (\dd \overline{\mu})}
{\int \{ \prod_{l=1}^\xnum k(x_l,\overline{\mu}) \} p_\alpha^{\Di} (\dd \overline{\mu})} \notag \\
&=
\frac{\idotsint k(y,u_{\xnum+1}) \{\prod_{l=1}^\xnum k(x_l,u_l)\}
p^\mathrm{CR}_{\alpha}(\dd u_1,\ldots, \dd u_\xnum, \dd u_{\xnum+1})}
{\idotsint \{ \prod_{l=1}^\xnum k(x_l,u_l)\}
p^\mathrm{CR}_{\alpha}(\dd u_1,\ldots,\dd u_\xnum)} \notag \\
&=
\frac{\idotsint k(y,u_{\xnum+1}) p^\mathrm{CR}_{\alpha}(\dd u_{\xnum+1} \mid u_1,\ldots,u_\xnum) \{\prod_{l=1}^\xnum k(x_l,u_l)\}
p^\mathrm{CR}_{\alpha}(\dd u_1,\ldots, \dd u_\xnum)}
{\idotsint \{ \prod_{l=1}^\xnum k(x_l,u_l)\}
p^\mathrm{CR}_{\alpha}(\dd u_1,\ldots, \dd u_\xnum)},
\label{posmeanlambdabar}
\end{align}
which does not depend on $\beta$ or $\gamma$.
Thus, the posterior mean of $\overline{\lambda}$ with respect to the prior
$\pi_{\alpha,\beta,\gamma}$ is $\overline{\lambda}_{\alpha,\xx}$,
which does not depend on $\beta$ or $\gamma$.

We can obtain the Bayes estimates of $\lambda$ based on $\pi_{\alpha,\beta,\gamma}$
and $\pi_{\alpha,\gamma} := \pi_{\alpha,\beta=\infty,\gamma}$
using \eqref{pomeanw}, \eqref{pomeanlambda}, \eqref{pomeanw2}, \eqref{pomeanlambda2}, and
\eqref{posmeanlambdabar}.

\vspace{0.4cm}

\noindent
Example.
We consider intensity functions on a unit circle $[0,2\pi)$
for simplicity.
Let the true intensity function be
\[
 \lambda(u) = \sin(u) + 2.
\]

We obtain Bayes estimates of $\lambda$ based on priors $\pi_{\alpha=1,\beta=\infty,\gamma=0}$
and $\pi_{\alpha=1,\beta=\infty,\gamma=|\alpha|-1}$.
Then, $|\alpha| := \int_0^{2\pi} \alpha(u) \dd u = 2 \pi > 1$.
We adopt the von Mises kernel 
\begin{align*}
 k_\kappa(x;\mu) = \frac{\exp \{\kappa \cos (x-\mu) \}}{2 \pi I_0(\kappa)}
\end{align*}
with $\kappa = 5$.
The observation time length $s$ is set to be $1$.

In Figure 1, Bayesian estimates of $\lambda$ based on non-shrinkage prior $\pi_{\alpha=1,\beta=\infty,\gamma=0}$
and shrinkage prior $\pi_{\alpha=1,\beta=\infty,\gamma=|\alpha|-1}$
with respect to observation $
(0.29, 1.55, 2.06, 2.85, 2.87, 3.60, 5.55, 5.61, 5.65, 6.01)$ are shown.
The Bayesian estimates based on the non-shrinkage prior and the shrinkage prior are 
$\lambda_{\alpha = 1,\gamma = 0, \boldsymbol{x}}
= (\xnum + 2 \pi) \blambda_{\alpha, \boldsymbol{x}}$,
and
$\lambda_{\alpha = 1,\gamma = \absalpha-1, \boldsymbol{x}}
= (\xnum + 1) \blambda_{\alpha, \boldsymbol{x}}$,
respectively,
where $\blambda_{\alpha, \boldsymbol{x}}$ is numerically evaluated by MCMC.

From Theorem \ref{domination},
the estimator based on the shrinkage prior dominates that based on the non-shrinkage prior.
Specifically, the Kullback--Leibler risk of the estimator based on the shrinkage prior is smaller
than that of the estimator based on the non-shrinkage prior for all $\lambda$.
This example illustrates how shrinkage priors improve Bayes estimators based on non-shrinkage priors.

\begin{figure}[!htbp]
\begin{center}
\includegraphics[height=8cm]{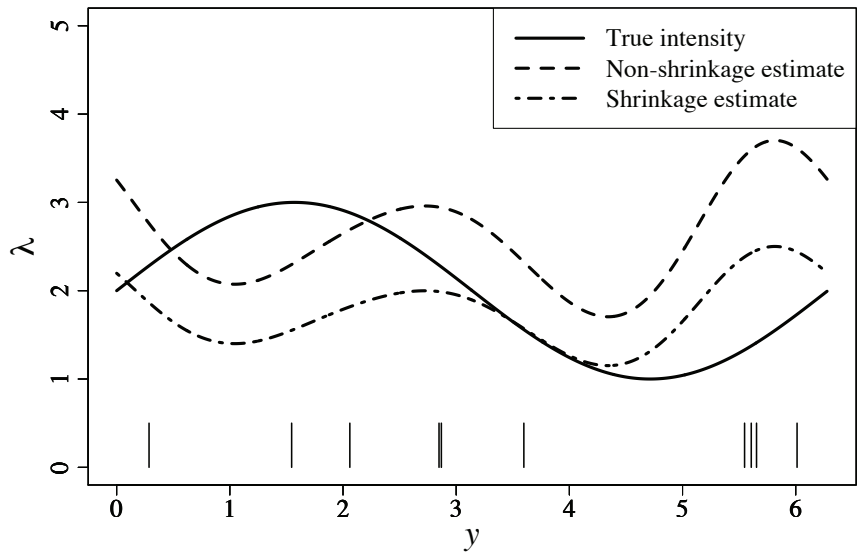}
\end{center}
\caption{Bayesian estimates of $\lambda$ based on non-shrinkage prior $\pi_{\alpha=1,\beta=\infty,\gamma=0}$
and shrinkage prior $\pi_{\alpha=1,\beta=\infty,\gamma=|\alpha|-1}$.
The (sorted) observed points indicated by the vertical ticks at the bottom
are located at $0.29, 1.55, 2.06, 2.85, 2.87, 3.60, 5.55, 5.61, 5.65$, and $6.01$.}
\end{figure}

\section{Discussion and Conclusion}

A class of improper shrinkage priors for nonparametric Bayesian inference
of the nonhomogeneous Poisson processes with kernel mixture is investigated.
The Bayesian predictive densities and the Bayes estimators based on the priors are admissible
under the Kullback--Leibler loss.
The class of improper  priors could be useful as objective priors for nonhomogeneous Poisson models.

The information theoretic properties of the Kullback--Leibler loss play essential roles in our theory.
Our results can be easily generalized to various methods for intensity estimation
including models based on kernels with parameters.

Although few studies on admissibility of Bayesian prediction and inference based on improper priors
for infinite-dimensional models have been carried out,
our approach could be useful for various infinite-dimensional problems.

One important direction of future research is the generalization of the present results
for nonhomogeneous Poisson process models
to those for other stochastic process models such as nonhomogeneous negative binomial process models.
For finite dimensional models, \citet{HK:MMS2019,HK:JMVA2020} extended the theory for finite dimensional Poisson models
to finite dimensional negative binomial models and negative multinomial models.
These generalizations require techniques not used in the theory for the Poisson models.

Hazard rate models closely related to nonhomogeneous Poisson models are widely used in applications and kernel mixtures with general random
measures have been investigated \citep{AS:James2005, AS:DBPP2009}.
The use of various random measures other than the gamma random measure has been studied.
The relation between theories on hazard rate models with general random measures
and the present study is also an important topic for future study.

Nonhomogeneous Poisson process models have a variety of applications.
For example, they have recently been used in neural information processing \citep{SK17,SK20}.
The development of data analyzing methods for such applications based on the theory presented here is also a future challenge.
In applications such as survival analysis, weighted gamma processes are often used as priors.
By extending the theorems in \citet{K:JMVA15} to those for infinite-dimensional models,
we could generalize our results to improve the limits of weighted gamma process priors.

Furthermore, estimation and prediction with infinite dimensional statistical models based on Brownian motions
could be studied in line with the approach adopted in the present paper.

\vspace{0.4cm}

\noindent
{\bf Acknowledgments.}
The author is grateful to two reviewers for their helpful and constructive comments.
This research was supported by MEXT KAKENHI Grant Number 16H06533,
JST CREST Grant Number JPMJCR1763,
and AMED Grant Numbers JP20dm0207001 and JP20dm0307009.

\vspace{0.4cm}

\bibliographystyle{sjs}

\vspace{0.4cm}

\appendix

\section{Proofs of Theorems and Lemmas}

\begin{proof}[Proof of Lemma \ref{fielddifferential}]
Since 
\[
f(r,s) := \E \bigl[ h(\zz_r,s) \bigr]
= \sum_{n=0}^\infty 
\left[ \frac{(r w)^n}{n!} \exp (-r w) \idotsint \Bigl\{ \prod_{i=1}^n \blambda (z_i) \Bigr\} h_n(z_1,\ldots,z_n,s)
\dd z_1 \cdots \dd z_n
\right]
\]
and $\frac{\partial}{\partial s}\E[h(\zz_r,s)] = \E[\frac{\partial}{\partial s} h(\zz_r,s)]$,
we have
\begin{align*}
\frac{\dd}{\dd \tau} \E \bigl[h(\zz_\tau,\tau)\bigr]
=&\ \frac{\dd}{\dd \tau} f(\tau,\tau)
= \frac{\partial}{\partial r} f(r,\tau) \biggr|_{r=\tau} + \frac{\partial}{\partial s} f(\tau,s) \biggr|_{s=\tau} \\
=&\
w \sum_{n=1}^\infty \left[
\frac{(\tau w)^{n-1}}{(n-1)!} \exp (-\tau w) \idotsint \Bigl\{ \prod_{i=1}^n \blambda (z_i) \Bigr\} h_n(z_1,\ldots,z_n,\tau)
\dd z_1 \cdots \dd z_n
\right] \\
& 
- w
\sum_{n=0}^\infty
\left[\frac{(\tau w)^n}{n!} \exp (-\tau w) \idotsint \Bigl\{ \prod_{i=1}^n \blambda (z_i) \Bigr\} h_n(z_1,\ldots,z_n,\tau)
\dd z_1 \cdots \dd z_n
\right] \\
&+ \frac{\partial}{\partial r} \E \Bigl[ h(\zz_\tau,r) \Bigr] \biggr|_{r=\tau} \\
=&\ \int_U \lambda(y) \E_\lambda \Bigl[ h(\zz_\tau + \delta_y,\tau) - h(\zz_\tau,\tau) \Bigr] \dd y
+ \E \left[ \frac{\partial h}{\partial r}(\zz_\tau,r) \biggr|_{r=\tau} \right].
\end{align*}
\end{proof}

\vspace{0.4cm}

\begin{proof}[Proof of Theorem \ref{PRF}]
Let
\begin{align*}
h(\zz_r,s) &:= \log \frac{p_{s,\lambda}(\zz_r)}{p_{s,\pi}(\zz_r)}
= \log \frac
{\displaystyle \Bigl\{ \prod^{N_r}_{i=1} s\lambda (z_{r,i}) \biggr\}
\frac{1}{N_r!} \exp ( -s \masslambda )}
{\displaystyle \int \Bigl\{ \prod^{N_r}_{i=1} s \tlambda (z_{r,i}) \Bigr\}
\frac{1}{N_r!} \exp ( -s \tmasslambda ) \pi ( \dd \tlambda)}
\\
&= - 
\log \int 
\Bigl\{ \prod^{N_r}_{i=1} \frac{\tlambda (z_{r,i})}{\lambda (z_{r,i})} \Bigr\}
\exp ( -s \tmasslambda ) \pi ( \dd \tlambda)
- s \masslambda.
\end{align*}
From Lemma \ref{fielddifferential},
the integrand of \eqref{KLintegral} is represented by
\begin{align}
\frac{\dd}{\dd \tau}&
\E_\lambda \left[ \log \frac{p_{\tau,\lambda}(\zz_\tau)}{p_{\tau,\pi}(\zz_\tau)} \right]
= \frac{\dd}{\dd \tau}
\E_\lambda \Bigl[ h(\zz_\tau,\tau) \Bigr]
= - \frac{\dd}{\dd \tau}
\E_\lambda \left[
\log \int 
\Bigl\{ \prod^{N_\tau}_{i=1} \frac{\tlambda (z_{\tau,i})}{\lambda (z_{\tau,i})} \Bigr\}
\exp ( -\tau \tmasslambda ) \pi ( \dd \tlambda) \right]
- w \notag \\
= & - \int_U \lambda (y) \E_\lambda
\left[ \log \int \Bigl\{ \prod^{N_\tau}_{i=1} \frac{\tlambda (z_{\tau,i})}{\lambda (z_{\tau,i})} \Bigr\}
\frac{\tlambda (y)}{\lambda (y)} \exp ( -\tau \tmasslambda ) \pi ( \dd \tlambda)  
 - \log \int \Bigl\{ \prod^{N_\tau}_{i=1} \frac{\tlambda (z_{\tau,i})}{\lambda (z_{\tau,i})} \Bigr\}
\exp ( -\tau \tmasslambda ) \pi ( \dd \tlambda) \right] \dd y \notag \\
  &- \E_{\lambda}
\left[ \frac{\displaystyle  - \int \tmasslambda \bigl\{ \textstyle\prod\limits^{N_\tau}_{i=1} \tlambda (z_{\tau,i}) \bigr\}
\exp ( -\tau \tmasslambda ) \pi ( \dd \tlambda ) }
{\displaystyle \int \bigl\{ \textstyle\prod\limits^{N_\tau}_{i=1} \lambda' (z_{\tau,i}) \bigr\}
\exp ( - \tau \tmasslambda ) \pi ( \dd \tlambda ) } \right]
- w \notag \\
= & - \int_U \lambda (y) \log \frac{\lambda_{\pi,\zz_{\tau},\tau} (y)}{\lambda (y)} \dd y
+ \masslambda_{\pi,\zz_{\tau},\tau} - \masslambda \notag \\
=&\ w_{\pi,\zz_\tau,\tau} - w 
- w \log \frac{w_{\pi,\zz_\tau,\tau}}{w}
+ w \int_U \blambda (y) \log \frac{\blambda(y)}{\blambda_{\pi,\zz_\tau,\tau}(y)} \dd y.
\label{differential}
\end{align}
From \eqref{KLintegral} and \eqref{differential}, we have the desired result.
\end{proof}

\vspace{0.4cm}

\begin{proof}[Proof of Theorem \ref{domination}]
We prove the first part (1) for Bayesian predictive densities.
The second part (2) for Bayes estimators can be proved in a similar manner.

Since \eqref{posteriormean}, the posterior mean of
$\overline{\lambda}$ based on $\pi_{\alpha,\gamma}$ coincides with that based on $\pi_{\alpha,\widetilde{\gamma}}$.
Therefore, from Theorem \ref{PRF}, we have 
\begin{align}
\E_\lambda & \Bigl[ D(p(\yy \mid \lambda), p_{\alpha,\gamma}(\yy \mid \xx)) \Bigr]
- \E_\lambda \Bigl[ D(p(\yy \mid \lambda), p_{\alpha,\tgamma}(\yy \mid \xx)) \Bigr] \notag \\
&= \int_s^t w \,
\E_\lambda \left[ \frac{w_{\alpha,\gamma,\tau}}{w} - 1 - \log \frac{w_{\alpha,\gamma,\tau}}{w} \right] \dd \tau
- \int_s^t w \,
\E_\lambda \left[ \frac{w_{\alpha,\tgamma,\tau}}{w} - 1 - \log \frac{w_{\alpha,\tgamma,\tau}}{w} \right] \dd \tau,
\label{riskdiff1}
\end{align}
where $w_{\alpha,\gamma,\tau} := w_{\alpha,\gamma,\zz(\tau),\tau}$.
From \eqref{pomeanw2},
the posterior means of $w$ with respect to priors $\pi_\alpha=\pi_{\alpha,\gamma=0}$
and $\pi_{\alpha,\widetilde{\gamma}=|\alpha|-1}$ are
$w_{\alpha,\tau} = (N_\tau + |\alpha|)/\tau$
and
$w_{\alpha,\tgamma=|\alpha|-1,\tau} = (N_\tau + 1)/\tau$,
respectively, when we observe $N_\tau$.

Thus, the integrand of \eqref{riskdiff1} with $\gamma=0$ and $\tgamma=|\alpha|-1$ is
\begin{align}
w \E_\lambda & \biggl[
\frac{w_{\alpha,\tau}}{w} -1 - \log \frac{w_{\alpha,\tau}}{w} \biggr]
- w \E_\lambda \biggl[
\frac{w_{\alpha,\tgamma=|\alpha|-1,\tau}}{w} - 1
- \log \frac{w_{\alpha,\tgamma=|\alpha|-1,\tau}}{w} \biggr] \notag \\
=&\
\frac{|\alpha|}{\tau} - w \, \E_\lambda \left[ \log \frac{N_\tau + |\alpha|}{\tau} \right]
- \frac{1}{\tau} + w \, \E_{\lambda} \left[ \log \frac{N_\tau+1}{\tau} \right] \notag \\
=&\ \frac{1}{\tau} \biggl\{ |\alpha|-1 - \tau w \, \E_\lambda \Bigl[\log (N_\tau + |\alpha|)\Bigr]
+ \tau w \, \E_\lambda \Bigl[\log (N_\tau + 1) \Bigr] \biggr\}.
\label{integrand}
\end{align}
From Lemma 3 below, \eqref{integrand} is positive.
\end{proof}

\vspace{0.4cm}
\noindent
\begin{proof}[Proof of Lemma \ref{Poissonineq}]
\begin{align*}
c - \sum_{n=0}^\infty& \exp(-\theta) \frac{\theta^{n+1}}{n!} \biggl\{
\log (n+1+c) - \log (n+1) \biggr\} \\
\geq&\ c - \sum_{n=0}^\infty \exp(-\theta) \frac{\theta^{n+1}}{n!} \frac{c}{n+1}
= c \exp(-\theta) \biggl\{ \exp \theta - \sum_{n=0}^\infty \frac{\theta^{n+1}}{(n+1)!} \biggr\}
= c \exp(-\theta) > 0.
\end{align*}
\end{proof}

\vspace{0.4cm}

\begin{proof}[Proof of Theorem \ref{admissible}]
We prove the first part (1) for Bayesian predictive densities using Blyth's method with a convex loss.
Blyth's method is widely used to prove admissibility usually for problems with a finite-dimensional parameter space.
The method with a convex loss also works for our infinite-dimensional problem.
The second part (2) for Bayes estimators can be proved in a similar manner.

By the assumption, the distributions $p_\lambda(\xx)$ $(\lambda \in \Lambda)$ are absolutely continuous with
respect to each other.
The action space $\mathcal{P}$ is convex because
$a q(\yy) + (1-a) q'(\yy) \in \mathcal{P}$ if both $q(\yy)$ and $q'(\yy)$ belong to $\mathcal{P}$ and $0 \leq a \leq 1$.
The Kullback--Leibler loss
\begin{align}
L(\lambda, q) :=
\E_{\lambda} \left[ \log \frac{p_\lambda(\ynum,y_1,\ldots,y_\ynum)}{q(\ynum,y_1,\ldots,y_\ynum)} \right]
\notag
\end{align}
is a strictly convex function from $q \in \mathcal{P}$ to $[0,\infty]$ for every fixed $\lambda$.

We use a monotonically increasing sequence of proper priors defined by
$\pi_{\alpha, \gamma}^{[l]}(\dd \mu) = \pi_{\alpha,\gamma}(\dd \mu) \frac{1}{2}h_l^2(w)$ $(l=1,2,\ldots)$,
where
$\pi_{\alpha,\gamma}(\dd \mu)
=  p^{\Di}_\alpha(\dd \overline{\mu}) \, p^\mathrm{Ga}_{|\alpha|-\gamma,\infty}(\masslambda)
 \dd \masslambda
=  p^{\Di}_\alpha(\dd \overline{\mu}) \, \masslambda^{|\alpha|-\gamma-1}
 \dd \masslambda$
and
\begin{align*}
h_l(w) &= \begin{cases}
                 1 & \mbox{~if~~~} 0 \leq w \leq 1 \\
                 \displaystyle 1 - \frac{\log w}{\log l} & \mbox{~if~~~} 1 < w \leq l \\
                 0 & \mbox{~if~~~} l < w.
                \end{cases}
\end{align*}
Function sequences of this kind were introduced by \citet{BH:SDT06} and are used to
prove admissibility of linear estimators \citep{GY:AS1988}
and admissibility of predictive densities \citep{K:AS2004} for finite-dimensional independent Poisson models.
Then, $\pi_{\alpha,\gamma}^{[l]} \ll \pi_{\alpha,\gamma}$ for all $l=1,2,\ldots$.
Let $C := \{(w,\overline{\lambda}) \mid 0 < w < 1, \overline{\lambda} \in \overline{\Lambda} \}$.
Then, $\pi(C) > 0$ and $\dd \pi^{[l]}/\dd \pi = 1/2$ if $\mu \in C$.

In order to prove admissibility of $p_{\alpha, \gamma}(\yy \mid \xx)$ 
based on $\pi_{\alpha, \gamma}$ with $0< |\alpha| - \gamma \leq 1$,
it suffices to show
\begin{align}
\lim_{l \rightarrow \infty} \int \pi_{\alpha, \gamma}^{[l]}(\dd \lambda) &
\Bigl\{ \E_\lambda \bigl[ D(p_\lambda(\yy),p_{\alpha, \gamma}(\yy \mid \xx)) \bigr]
- \E_\lambda \bigl[ D(p_\lambda(\yy),p_{\alpha, \gamma}^{[l]}(\yy \mid \xx)) \bigr] \Bigr\} = 0,
\label{Blyth}
\end{align}
where $p^{[l]}$ is the Bayesian predictive density based on $\pi_{\alpha,\gamma}^{[l]}$.
The proof of the sufficiency of \eqref{Blyth} for admissibility is given in Appendix B.

We set $c = \gamma - |\alpha| + 1$ $(0 \leq c < 1)$, and $g_l(w) = (1/2) h_l^2 (w)$.
From Theorem \ref{PRF} and \eqref{riskdiff1}, we have the expression
\begin{align}
&\int \pi_{\alpha, \beta}^{[l]}(\dd \lambda)
\Bigl\{ \E_\lambda \bigl[ D(p(\yy \mid \lambda),p_{\alpha, \beta}(\yy \mid \xx)) \bigr]
 - \E_\lambda \bigl[ D(p(\yy \mid \lambda),p_{\alpha, \beta}^{[l]}(\yy \mid \xx)) \bigr] \Bigr\} \notag \\
&= \int^{s+t}_s
\biggl\{
\int_0^\infty g_l(w) w^{-c} \sum_{n=0}^\infty \exp(-\tau w) \frac{(\tau w)^n}{n!}
\biggl( \frac{n+1-c}{\tau} - \widehat{w}_{l,\tau}(n) - w \log \frac{n+1-c}{\tau \widehat{w}_{l,\tau}(n)} \biggr) \dd w
\biggr\} \dd \tau,
\label{Blyth2}
\end{align}
where
\begin{align}
\widehat{w}_{l,\tau}(n)
&= \frac{\int_0^\infty w \exp (- \tau w) \frac{(\tau w)^{n}}{n!} w^{-c} g_l(w) \dd w}
{\int_0^\infty \exp (- \tau w) \frac{(\tau w)^{n}}{n!} w^{-c} g_l(w) \dd w}
= \frac{\int_0^\infty \exp (- \tau w) (\tau w)^{n+1-c} g_l(w) \dd w}
{\tau \int_0^\infty \exp (- \tau w) (\tau w)^{n-c} g_l(w) \dd w} \nonumber \\
&= \frac{n + 1 -c}{\tau}
+ \frac{\int_0^\infty \exp (- \tau w) (\tau w)^{n+1-c} g'_l(w) \dd w}
{\tau^2 \int_0^\infty \exp (- \tau w) (\tau w)^{n-c} g_l(w) \dd w},
\label{estimator}
\end{align}
because $\overline{\lambda}_{\alpha,\gamma} = \overline{\lambda}^{[l]}_{\alpha,\gamma}$ and it does not depend on $\gamma$.

The integral \eqref{Blyth2} coincides with (22) in \citet{K:AS2004} and
converges to $0$ as $l$ goes to infinity as shown in the following.

We show that \eqref{Blyth2} converges to $0$ in the same manner as the evaluation of (22) in \citet{K:AS2004}.
We include the proof here for self-containedness.

We have
\begin{align}
\int_0^\infty & g_l(w) w^{-c} \sum_{z=0}^\infty \exp(-\tau w) \frac{(\tau w)^z}{z!}
\biggl( \frac{z + 1 - c}{\tau} - \widehat{w}_{l,\tau} - w \log \frac{z+1-c}{\tau \widehat{w}_{l,\tau}} \biggr) \dd w \nonumber \\
\leq & \int_0^\infty g_l(w) w^{-c} \sum_{z=0}^\infty \exp(-\tau w) \frac{(\tau w)^z}{z!}
\biggl\{ \frac{z + 1 - c}{\tau} - \widehat{w}_{l,\tau} + w \frac{\tau \widehat{w}_{l,\tau} - (z+1-c)}{z+1-c} \biggr\} \dd w \nonumber \\
=& \sum_{z=0}^\infty \frac{\tau^{c-1}}{z!} \biggl\{
\int_0^\infty \exp(-\tau w) g_l(w) \frac{(\tau w)^{z+1-c}}{z+1-c} \dd w
- \int_0^\infty \exp(-\tau w) g_l(w) (\tau w)^{z-c} \dd w
\biggr\} \notag \\
&\cdot \{ \tau \widehat{w}_{l,\tau} - (z+1-c) \} \nonumber \\
=& \sum_{z=0}^\infty \frac{\tau^{c-2}}{z!}
\int_0^\infty \exp(-\tau w) g'_l(w) \frac{(\tau w)^{z+1-c}}{z+1-c} \dd w
\cdot \{ \tau \widehat{w}_{l,\tau} - (z+1-c) \}.
\label{inequality1}
\end{align}
Using (\ref{estimator}) and the inequality
$(z+1)/(z+1-c) \leq 1/(1-c)$, where $0 \leq c < 1$, we have
\begin{align}
\int_0^\infty & g_l(w) w^{-c} \sum_{z=0}^\infty \exp(-\tau w) \frac{(\tau w)^z}{z!}
\biggl( \frac{z + 1 - c}{\tau} - \widehat{w}_{l,\tau}
- w \log \frac{z+1-c}{\tau \widehat{w}_{l,\tau}} \biggr) \dd w \nonumber \\
\leq&\
\frac{\tau^{c-3}}{1-c} 
\sum_{z=0}^\infty \frac{1}{(z+1)!}
\frac{\displaystyle \biggl\{ \int_0^\infty \exp(-\tau w) (\tau w)^{z+1-c} g'_l(w) \dd w \biggr\}^2}
{\int \exp (- \tau w) (\tau w)^{z-c} g_l(w) \dd w} \nonumber \\
=&\ \frac{\tau^{c-3}}{1-c} \sum_{z=0}^\infty \frac{2}{(z+1)!}
\frac{\displaystyle \biggl\{ \int_0^\infty \exp(-\tau w) (\tau w)^{z-c} (\tau w h'_l(w)) h_l(w) \dd w \biggr\}^2}
{\int \exp (- \tau \bmu) (\tau w)^{z-c} h_l^2(w) \dd w} \nonumber \\
\leq&\
\frac{\tau^{c-3}}{1-c}
\sum_{z=0}^\infty \frac{2}{(z+1)!}
\frac{\displaystyle \biggl\{ \int_0^\infty \exp(-\tau w) (\tau w)^{z-c} (\tau w h'_l(w))^2 \dd w \biggr\}
\cdot \biggl\{ \int_0^\infty \exp(-\tau w) (\tau w)^{z-c} h_l^2(w) \dd w \biggr\}}
{\int \exp (- \tau w) (\tau w)^{z-c} h_l^2(w) \dd w} \nonumber \\
=&\
\frac{\tau^{c-3}}{1-c} 
\sum_{z=0}^\infty \frac{2}{(z+1)!}
\int_0^\infty \exp(-\tau w) (\tau w)^{z+2-c} (h'_l(w))^2 \dd w \nonumber \\
=&\
\frac{2\tau^{c-3}}{1-c} \int^\infty_0 \{1-\exp(-\tau w)\} (\tau w)^{1-c} (h'_l (w))^2 \dd w
\leq
\frac{2}{(1-c)\tau^2} \int^\infty_0 w^{1-c} (h'_l (w))^2 \dd w
\label{inequality2}
\end{align}
The derivative of $h_l(w)$ is
\begin{align}
h'_l(w) = \begin{cases}
                 0 & \mbox{~if~~~} 0 < w < 1 \\
                 {\displaystyle - \frac{1}{w \log l}} & \mbox{~if~~~} 1 < w < l \\
                 0 & \mbox{~if~~~} l < w.
                \end{cases}
\label{h_l derivative}
\end{align}
From (\ref{Blyth2}), (\ref{inequality2}) and (\ref{h_l derivative}), we have
\begin{align*}
\int & \pi_{\alpha, \beta}^{[l]}(\dd \lambda) \E_\lambda \Bigl[ D(p_\lambda(\yy),p_{\alpha, \beta}(\yy \mid \xx))
- D(p_\lambda(y), p_{\alpha, \beta}^{[l]}(\yy \mid \xx)) \Bigr] \\
&\leq \int_s^{s+t} \biggl\{ \frac{2}{(1-c)\tau^2} \int^\infty_0 w^{1-c} (h_l' (w))^2 \dd w \biggr\} \dd \tau
= \int^{s+t}_s \biggl\{ \frac{2}{(1-c)\tau^2} \int_1^l \frac{1}{w^{1+c}(\log l)^2} \dd w \biggr\} \dd \tau \\
&\leq \int_s^{s+t} \frac{2}{(1-c)\tau^2} \frac{1}{\log l} \dd \tau
= \frac{2}{(1-c) \log l} \biggl( \frac{1}{s} - \frac{1}{s+t} \biggr)  \rightarrow 0 \ \ \mbox{as} \ \ l \rightarrow \infty.
\end{align*}
\end{proof}

\section{Sufficiency of \eqref{Blyth} for admissibility}

We show that \eqref{Blyth} is sufficient for admissibility.
The proof for our infinite-dimensional problem
parallels the proof for finite-dimensional problems with a convex loss
\citep[see][Chapter 3]{Schervish95}.

We denote
a map from $\xx$ to a probability distribution of $\yy$ by $q$,
and denote the probability density that is the image of $\xx$ under $q$ by $q_{\xx}$.
The map $q$ corresponds to a predictive density.

Let $L(\lambda,q_{\xx})$ be a loss function, which is strictly convex with respect to $q_{\xx}$,
and let $R(\lambda,q):=\E_\lambda[L(\lambda,q_{\xx})]$,
where $\xx$ is distributed according to a probability density $p_\lambda$.
Assume that a predictive density $q$ satisfying
\begin{align}
\lim_{l \rightarrow \infty}
\left\{ R(\pi^{[l]}, q) - R(\pi^{[l]}, q^{[l]}) \right\} = 0,
\label{sufficiency condition}
\end{align}
where $q^{[l]}$ is the Bayesian predictive density based on $\pi^{[l]}$ and
$R(\pi^{[l]}, q^{[l]}) := \int R(\lambda, q^{[l]}) \pi^{[l]}(\dd \lambda)$
is the Bayes risk of $q^{[l]}$ with respect to the prior $\pi^{[l]}$,
is inadmissible.
Assume that $\dd \pi^{[l]}/\dd \pi \geq a > 0$ if $\mu \in C$ for a subset $C$ of the parameter space.
We assume that all probability measures in $\{p_\lambda:\lambda \in \Lambda\}$
are mutually absolutely continuous with respect to each other.

In our problem, \eqref{sufficiency condition} corresponds to \eqref{Blyth},
$L$ is the Kullback--Leibler loss, $\pi^{[l]} = \pi^{[l]}_{\alpha,\gamma}$,
and $\dd \pi^{[l]}/\dd \pi = 1/2$ if
$\mu \in C := \{(w,\overline{\lambda}) \mid 0 < w < 1, \overline{\lambda} \in \overline{\Lambda} \}$.

We prove admissibility of the predictive density $q$ by contradiction.
Since $q$ is inadmissible, there exists another predictive density $q'$ such that, for all $\lambda \in \Lambda$,
$R(\lambda,q') \leq R(\lambda,q)$,
and for at least one parameter value $\lambda_0 \in \Lambda$,
$R(\lambda_0,q') < R(\lambda_0,q)$.
Since $L(\lambda,q_{\xx})$ is strictly convex with respect to $q_{\xx}$ for every $\lambda$,
\begin{align*}
L(\lambda,q''_{\xx}) \leq \frac{1}{2} \bigl\{L(\lambda,q'_{\xx}) + L(\lambda,q_{\xx})\bigr\},
\end{align*}
where $q'' := (q + q')/2$,
for every $\lambda$ and $\xx$.
For every $\xx$ such that $q_{\xx} \neq q'_{\xx}$, we have
\begin{align*}
 L(\lambda,q''_{\xx}) < \frac{1}{2} \bigl\{L(\lambda,q'_{\xx}) + L(\lambda,q_{\xx}) \bigr\}.
\end{align*}
Here, $P_{\lambda_0}(q'_{\xx} \neq q_{\xx}) > 0$
because otherwise $R(\lambda_0,q') < R(\lambda_0,q)$ does not hold.
Thus, $P_\lambda(q'_{\xx} \neq q_{\xx}) > 0$ for every $\lambda$
because all probability measures $p_\lambda$ $(\lambda \in \Lambda)$ are mutually absolutely continuous
with respect to each other.
Thus, for every $\lambda$,
$R(\lambda,q'') < R(\lambda,q)$.
Therefore, for all $l=1,2,3,\ldots$,
\begin{align*}
R(\pi^{[l]}, q) - R(\pi^{[l]}, q^{[l]}) &\geq R(\pi^{[l]}, q) - R(\pi^{[l]}, q'')
\geq \int_C \left\{ R(\lambda,q) - R(\lambda,q'') \right\} \pi^{[l]}(\dd \lambda) \\
&\geq a \int_C \left\{ R(\lambda,q) - R(\lambda,q'') \right\} \pi(\dd \lambda)
> 0,
\end{align*}
where the second inequality is because $C$ is a subset of the parameter space $\Lambda$.
This contradicts \eqref{sufficiency condition}.
\end{document}